\documentclass[preprint,12pt]{elsarticle}\usepackage{amsmath,amssymb,amsthm,mathtools} 
\usepackage{enumitem} 
\usepackage{hyperref} 
\title{Monotone maximum partial-twuality widths of vf-safe delta-matroids}
\author{Qi Yan, Zhao Zhao\\
\small School of Mathematics and Statistics, Lanzhou University, PR China\\
\small{\tt Email: yanq@lzu.edu.cn; zhzhao2025@lzu.edu.cn }}
\date{} 

\newtheorem{theorem}{Theorem}[section] 
\newtheorem{corollary}[theorem]{Corollary} 
\newtheorem{proposition}[theorem]{Proposition} 
\newtheorem{lemma}[theorem]{Lemma} 
\newtheorem{example}[theorem]{Example} 
 
\newtheorem{definition}[theorem]{Definition}

\begin{document}

\begin{abstract} 
For a delta-matroid, the maximum twist width theorem states that the maximum
width over all twists can be reached along a non-decreasing sequence of
intermediate twist widths. In this paper we study analogous monotone maximum
width sequences for partial twualities generated by twist and loop complementation.
We prove that, for each non-twist partial-twuality operation on a vf-safe
delta-matroid, there exists a subset attaining the corresponding maximum
partial-twuality width whose elements can be ordered so that the successive
intermediate widths are non-decreasing. Together with the known twist case, this
gives a monotone maximum width theorem for all five nontrivial partial-twuality
operations on vf-safe delta-matroids. We also prove feasible-set attainment results
for the operations \(\ast\times\) and \(\ast\times\ast\).
Finally, we translate these results to ribbon graphs, obtaining monotone sequences
for maximum partial-twuality Euler genera and spanning quasi-tree attainment
results for the corresponding ribbon graph operations.
\end{abstract} 
\maketitle 
\section{Introduction} 
Partial duality of ribbon graphs was introduced by Chmutov \cite{Chmutov2009} as a far-reaching generalization of geometric duality. It has since become an important operation in topological graph theory and in the study of graph polynomials on surfaces. One natural problem is to understand how the genus, or Euler genus, of a ribbon graph changes under partial duality. 

Recently, Chen, Gross and Tucker \cite{ChenGrossTucker2026}  obtained formulas for the maximum partial-dual genus of orientable ribbon graphs and for the maximum partial-dual Euler genus of arbitrary ribbon graphs. A key point of their work is that this maximum can be realized by taking the partial dual with respect to the edge set of a spanning quasi-tree. They also posed a monotonicity problem: whether one can reach the maximum partial-dual Euler genus by dualizing edges one at a time, so that the intermediate Euler genera never decrease. 

Delta-matroids provide a natural matroid framework for this problem. If \(G\) is a ribbon graph, then its associated delta-matroid \(D(G)\) has as feasible sets precisely the edge sets of spanning quasi-trees of \(G\) \cite{ChunMoffattNobleRueckriemen2019JCTA}. Moreover, partial duality of ribbon graphs corresponds to twist of delta-matroids, and the width of \(D(G)\) is equal to the Euler genus of \(G\) \cite{ChunMoffattNobleRueckriemen2019JCTA}. Thus the maximum partial-dual Euler genus problem for ribbon graphs has a natural delta-matroid counterpart: the maximum twist width problem. 

This point of view was developed by Jin, Li, Yan and Zhang \cite{JinLiYanZhang2026}. They showed that the maximum twist width of a delta-matroid can be attained by twisting a feasible set, thereby extending the spanning quasi-tree attainment result from ribbon graphs to delta-matroids. They also proved the corresponding monotonicity theorem: the feasible set attaining the maximum twist width can be ordered so that the intermediate twist widths form a non-decreasing sequence. In particular, this gives an affirmative answer to the monotonicity problem of Chen, Gross and Tucker for ribbon graphs. 

The purpose of this paper is to continue this line of research from partial duality to partial twuality. Besides twist, there is another basic operation on set systems, namely loop complementation. Twist and loop complementation generate an \(S_3\) action on each element of a set system \cite{BrijderHoogeboom2011}. The five nontrivial operations may be represented by \( \{\ast, \times, \ast\times, \times\ast, \ast\times\ast\}. \) From the viewpoint of ribbon graphs, these operations correspond to partial duality,
partial Petriality, and their compositions \cite{ChunMoffattNobleRueckriemen2019PLMS}. It is therefore natural to ask whether the monotone maximum width phenomenon for twists extends to the other partial-twuality operations. There is, however, an essential difference between twists and the operations involving loop complementation. Loop complementation does not preserve arbitrary delta-matroids. For this reason, the appropriate setting is the class of vf-safe delta-matroids, namely delta-matroids that remain delta-matroids under every sequence of twists and loop complementations. This class includes both binary delta-matroids and ribbon-graphic delta-matroids \cite{ChunMoffattNobleRueckriemen2019PLMS}. 

Our main result proves that the monotone maximum width conclusion holds for the two operations \(\ast\times\) and \(\times\ast\) on vf-safe delta-matroids. More precisely, for each of these operations, there exists a subset attaining the maximum partial-twuality width, and its elements can be ordered so that the corresponding intermediate widths are non-decreasing. We also record the corresponding results for the remaining operations. The case of \(\times\) follows from a simpler version of the same argument, and the case of \(\ast\times\ast\) follows from the \(\times\)-case by duality. Together with the known twist case, this shows that all five nontrivial partial-twuality operations admit monotone maximum width sequences in the vf-safe setting. 

Finally, we translate the results back to ribbon graphs. Since ribbon-graphic delta-matroids are vf-safe and width corresponds to Euler genus, our theorems yield monotone sequences for maximum Euler genera under the corresponding partial twualities of ribbon graphs. 

\section{Preliminaries} 

\subsection{Delta-matroids} 

Let \(E\) be a finite set. A \emph{set system} on \(E\) is a pair \(D=(E,\mathcal F)\), where \(\mathcal F\subseteq 2^E\). The members of \(\mathcal F\) are called \emph{feasible sets}. The set system is \emph{proper} if \(\mathcal F\neq\emptyset\). For \(X,Y\subseteq E\), write \[ X\triangle Y=(X\cup Y)\setminus(X\cap Y) \] for their symmetric difference. 

\begin{definition}[\cite{Bouchet1987}]
A proper set system \(D=(E,\mathcal F)\) is a delta-matroid if it satisfies the symmetric exchange axiom: for all \(X,Y\in\mathcal F\) and every \(u\in X\triangle Y\), there exists \(v\in X\triangle Y\) (possibly \(v=u\)) such that \[ X\triangle\{u,v\}\in\mathcal F. \] 
\end{definition} 

If all feasible sets have the same cardinality, then \(D\) is a \emph{matroid} in the sense that \(\mathcal F\) is the set of bases of a matroid on \(E\). For a proper set system \(D=(E,\mathcal F)\), 
let $$D_{\max} = (E, \mathcal{F}_{\max}(D))$$ and $$D_{\min} = (E, \mathcal{F}_{\min}(D)).$$
Let $r(D_{\max})$ and $r(D_{\min})$ denote the sizes of the largest and smallest feasible sets of $D$, respectively. 
The \emph{width} of $D$, denoted by $\omega(D)$, is defined by
\[
\omega(D) = r(D_{\max}) - r(D_{\min}).
\]
For $0\leq i\leq \omega(D)$, define
\[
\mathcal F_{\min+i}(D)=\{F\in\mathcal F\mid |F|=r(D_{\min})+i\},
\]
and
\[
\mathcal F_{\max-i}(D)=\{F\in\mathcal F\mid |F|=r(D_{\max})-i\}.
\]

\subsection{Twists and loop complementations} 

\begin{definition}[\cite{Bouchet1987}] 
Let \(D=(E,\mathcal F)\) be a set system and let \(A\subseteq E\). The twist of \(D\) with respect to \(A\) is the set system \[ D^{\ast| A }= (E,\{F\triangle A \mid F\in\mathcal F\}). \] The dual of \(D\) is \(D^\ast=D^{\ast| E}.\) \end{definition} 

\begin{definition}[\cite{BrijderHoogeboom2011}]
Let \(D=(E,\mathcal F)\) be a set system and let \(e\in E\). The loop complementation of \(D\) on \(e\), denoted \(D^{\times |e}\), is the set system \(D^{\times| e}=(E,\mathcal F')\), where 
\[ \mathcal F' = \mathcal F \triangle \{F\cup\{e\}\mid F\in\mathcal F,\ e\notin F\}. \] 
For \(A\subseteq E\), define \(D^{\times| A}\) by applying loop complementation to every element of \(A\). Loop complementations on distinct elements commute, so \(D^{\times |A}\) is well-defined. \end{definition} 

Twist and loop complementation on a single element generate a group isomorphic to
\(S_3\). We use the convention that words are read from left to right. Thus, for a
word \(\sigma\) in \(\ast\) and \(\times\), and for \(A\subseteq E\), the notation
\(D^{\sigma|A}\) means that the operation \(\sigma\) is applied to every element
of \(A\). Operations on distinct elements commute, so this notation is
unambiguous. For example,
\[
        D^{\ast\times|A}
        =
        (D^{\ast|A})^{\times|A},
\]
and
\[
        D^{\times\ast|A}
        =
        (D^{\times|A})^{\ast|A}.
\]

A delta-matroid \(D\) is \emph{vf-safe} \cite{ChunMoffattNobleRueckriemen2019PLMS} if every set system obtained from \(D\) by an arbitrary sequence of twists and loop complementations is again a delta-matroid.  The vf-safe hypothesis is therefore natural, since loop complementation does not preserve arbitrary delta-matroids.

\subsection{ Element types}
We next recall the element types used in the single element width change table. Let \(D=(E,\mathcal F)\) be a proper set system. An element \(e\in E\) is called a \emph{loop} of \(D\) if \(e\) is contained in no feasible set of \(D\).

\begin{definition}[\cite{ChunMoffattNobleRueckriemen2019PLMS}]

Let \(D=(E,\mathcal F)\) be a proper set system, and let \(e\in E\). \begin{description} \item[\textup{(1)}] The element \(e\) is called a {ribbon loop} of \(D\) if \(e\) is a loop of \(D_{\min}\).

\item[\textup{(2)}] A ribbon loop \(e\) is called {non-orientable} if \(e\) remains a ribbon loop in \(D^{\ast|e}\). Otherwise, the ribbon loop \(e\) is called {orientable}. \end{description}
\end{definition}

Based on the above definition, each element within a set system can be categorized into three distinct primal types. For a set system $D = (E, \mathcal{F})$ and an element $e \in E$, the primal type of $e$ is classified as $p$, $u$, or $t$. Specifically, $e$ is of type $p$ if it is not a ribbon loop, of type $u$ if it constitutes an orientable ribbon loop, and of type $t$ if it corresponds to a non-orientable ribbon loop.
The dual type of $e$ in $D$ refers to the primal type of the same element $e$ evaluated in the dual set system $D^{*}$. The overall type of an element is defined as a concatenation of its primal type and dual type, with the primal type placed in the first position and the dual type in the second. For illustration, an element with type $pu$ possesses primal type $p$ and dual type $u$.

The following proposition from \cite{YanJin2024} gives equivalent characterizations
of primal and dual element types in set systems.

\begin{proposition}[\cite{YanJin2024}]
\label{lemma6}
For a set system $D=(E,\mathcal{F})$ and $e\in E$, the following statements hold.
\begin{description}
\item[\textup{(1)}] The primal type of $e$ is $p$ in $D$ if and only if there exists $F \in \mathcal{F}_{\mathrm{min}}(D)$ such that $e \in F$.
\item[\textup{(2)}] The primal type of $e$ is $u$ in $D$ if and only if for every $F \in \mathcal{F}_{\mathrm{min}}(D) \cup \mathcal{F}_{\mathrm{min}+1}(D)$, $e \notin F$.
\item[\textup{(3)}] The primal type of $e$ is $t$ in $D$ if and only if for every $F \in \mathcal{F}_{\mathrm{min}}(D)$, $e \notin F$, and there exists $F_1 \in \mathcal{F}_{\mathrm{min}+1}(D)$ such that $e \in F_1$.

\item[\textup{(4)}] The dual type of $e$ is $p$ in $D$ if and only if there exists $F \in \mathcal{F}_{\mathrm{max}}(D)$ such that $e \notin F$.
\item[\textup{(5)}] The dual type of $e$ is $u$ in $D$ if and only if for every $F \in \mathcal{F}_{\mathrm{max}}(D) \cup \mathcal{F}_{\mathrm{max}-1}(D)$, $e \in F$.
\item[\textup{(6)}] The dual type of $e$ is $t$ in $D$ if and only if for every $F \in \mathcal{F}_{\mathrm{max}}(D)$, $e \in F$, and there exists $F_1 \in \mathcal{F}_{\mathrm{max}-1}(D)$ such that $e \notin F_1$.
\end{description}
\end{proposition}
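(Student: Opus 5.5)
We prove the six items in turn, arguing directly from the definitions of ribbon loop and non-orientable ribbon loop. Items (4)–(6) will then be obtained from (1)–(3) by passing to the dual \(D^\ast=D^{\ast|E}\). Throughout, write \(r_{\min}=r(D_{\min})\).

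\emph{Item (1).} By definition, \(e\) has primal type \(p\) exactly when \(e\) is not a ribbon loop, that is, not a loop of \(D_{\min}\). This says precisely that some \(F\in\mathcal F_{\min}(D)\) contains \(e\).

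\emph{Items (2) and (3).} Now suppose \(e\) is a ribbon loop, so \(e\) lies in no minimum feasible set. We examine \(D^{\ast|e}\), whose feasible sets are the sets \(F\triangle\{e\}\) with \(F\in\mathcal F\). Sets not containing \(e\) gain one element, and sets containing \(e\) lose one. Every \(F\in\mathcal F_{\min}(D)\) avoids \(e\), so \(F\cup\{e\}\) has size \(r_{\min}+1\). Every feasible set of \(D\) containing \(e\) has size at least \(r_{\min}+1\), so it becomes a set of size at least \(r_{\min}\).

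This gives a size dichotomy for \(D^{\ast|e}\):
\begin{itemize}
\item If some \(F_1\in\mathcal F_{\min+1}(D)\) contains \(e\), then \(F_1\setminus\{e\}\) has size \(r_{\min}\). Every feasible set of \(D^{\ast|e}\) has size at least \(r_{\min}\), so \(r((D^{\ast|e})_{\min})=r_{\min}\).
\item If no such \(F_1\) exists, then every feasible set of \(D\) containing \(e\) has size at least \(r_{\min}+2\) and drops to at least \(r_{\min}+1\). Sets avoiding \(e\) rise to at least \(r_{\min}+1\). So the minimum size of \(D^{\ast|e}\) is \(r_{\min}+1\), attained by \(F\cup\{e\}\) for \(F\in\mathcal F_{\min}(D)\).
\end{itemize}

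\emph{Orientable case (second bullet).} The minimum feasible sets of \(D^{\ast|e}\) include sets \(F\cup\{e\}\) that contain \(e\). Hence \(e\) is not a ribbon loop of \(D^{\ast|e}\), i.e. \(e\) is orientable.

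\emph{Non-orientable case (first bullet).} The minimum feasible sets of \(D^{\ast|e}\) have size \(r_{\min}\). They cannot come from sets avoiding \(e\), since those grow to size at least \(r_{\min}+1\). So they come from sets \(F_1\ni e\) of size \(r_{\min}+1\), and the resulting sets \(F_1\setminus\{e\}\) do not contain \(e\). Thus \(e\) remains a ribbon loop, i.e. it is non-orientable.

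Combining the two cases with the ribbon-loop hypothesis yields (2) and (3). In (2), the condition that \(e\) avoids every set in \(\mathcal F_{\min}\cup\mathcal F_{\min+1}\) covers both being a ribbon loop and the absence of \(F_1\).

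\emph{Items (4)–(6).} The dual type of \(e\) in \(D\) is its primal type in \(D^\ast\). The map \(F\mapsto E\setminus F\) is a bijection \(\mathcal F\to\mathcal F(D^\ast)\) that reverses sizes. It therefore sends \(\mathcal F_{\max-i}(D)\) onto \(\mathcal F_{\min+i}(D^\ast)\), and \(e\in E\setminus F\) holds iff \(e\notin F\). Applying (1)–(3) to \(D^\ast\) and translating membership gives (4)–(6).

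The only delicate point is the size bookkeeping in the twist, namely that \(r((D^{\ast|e})_{\min})\) is \(r_{\min}\) or \(r_{\min}+1\) according to whether \(e\) meets \(\mathcal F_{\min+1}(D)\). Once this is settled, the rest is routine.
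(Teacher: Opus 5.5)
Your proof is correct and complete. The paper gives no proof of this proposition; it is imported from \cite{YanJin2024}, so there is no in-paper argument to compare against. Your direct size count for $D^{\ast|e}$ supplies the verification that the citation stands in for. For a ribbon loop $e$, you show that $r((D^{\ast|e})_{\min})=r(D_{\min})$, with every minimum set avoiding $e$, exactly when $e$ lies in some member of $\mathcal F_{\min+1}(D)$. Otherwise $r((D^{\ast|e})_{\min})=r(D_{\min})+1$, attained by the sets $F\cup\{e\}$. For (4)--(6), you then apply the size-reversing bijection $F\mapsto E\setminus F$.

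Two implicit conventions are worth stating explicitly. First, you use that $D$ is proper, so some $F\in\mathcal F_{\min}(D)$ exists in the orientable case. Second, since the paper defines $\mathcal F_{\min+i}(D)$ only for $0\le i\le\omega(D)$, you should read $\mathcal F_{\min+1}(D)$ (and dually $\mathcal F_{\max-1}(D)$) as empty when $\omega(D)=0$.
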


\begin{table}[t]
  \centering
  \caption{The difference of $\omega(D^{\bullet|e})-\omega(D)$ for any $\bullet \in \{\ast,\times,\ast\times,\times\ast,\ast\times\ast\}$.}
  \label{tab:widths}
  \begin{tabular}{lccccc}
\hline
Type of $e$ & $\ast$ & $\times$ & $\ast\times$ & $\times\ast$ & $\ast\times\ast$ \\
\hline
$pp$ & $+2$ & $+1$ & $+2$ & $+2$ &$+1$\\
$uu$ & $-2$ & $0$ & $-1$ & $-1$ &$0$\\
$pu$ & $0$ & $0$ & $+1$ & $0$ &$+1$\\
$up$ & $0$ & $+1$ & $0$ & $+1$ &$0$\\
$tp$ & $+1$ & $+1$ & $+1$ & $0$ &$-1$\\
$tu$ & $-1$ & $0$ & $0$ & $-2$ &$-1 $\\
$pt$ & $+1$ & $-1$ & $0$ & $+1$ &$+1$\\
$ut$ & $-1$ & $-1$ & $-2$ & $0$ &$0$\\
$tt$ & $0$ & $-1$ & $-1$ & $-1$ &$-1$\\
\hline
\end{tabular}
\end{table}

\section{Feasible-set attainment for \texorpdfstring{\(\ast\times\)}{*x} and
\texorpdfstring{\(\ast\times\ast\)}{*x*}}
\label{sec:feasible-attainment}

For the twist operation, Jin et al.~\cite{JinLiYanZhang2026} proved that the maximum twist width can be attained by twisting a feasible set. Motivated by this result, we ask whether analogous feasible-set attainment holds for other partial-twuality operations. In this section, we show that it holds for \(\ast\times\) and \(\ast\times\ast\), but fails in general for \(\times\) and \(\times\ast\), even within the class of vf-safe delta-matroids.

\begin{theorem}
\label{thm:star-cross-feasible-max}
Let \(D=(E,\mathcal F)\) be a proper set system. Then there exists a feasible set
\(F\in\mathcal F\) such that
\[
        \omega(D^{\ast\times|F})
        =
        \max_{B\subseteq E}\omega(D^{\ast\times|B}).
\]
\end{theorem}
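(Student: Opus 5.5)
The plan is to describe the feasible sets of \(D^{\ast\times|B}\) explicitly, use this to bound \(\omega(D^{\ast\times|B})\) from above by a quantity that depends only on pairs of feasible sets of \(D\), and then find one feasible set whose \(\ast\times\)-twist reaches that bound.

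\textbf{Step 1 (the feasible sets).} Unwinding the definition of loop complementation, a set \(X\) is feasible in \(D^{\times|A}\) if and only if the number of \(G\in\mathcal F\) with \(G\subseteq X\) and \(X\setminus G\subseteq A\) is odd. Applying this to \(D^{\ast|B}\), a set \(X\) is feasible in \(D^{\ast\times|B}\) if and only if the number of \(F\in\mathcal F\) with \(F\triangle B\subseteq X\subseteq (F\triangle B)\cup B=F\cup B\) is odd.

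\textbf{Step 2 (upper bound for every \(B\)).} Two consequences follow from Step 1.
\begin{itemize}
\item A set \(F\triangle B\) of minimum size is counted only by itself, so it is feasible. Every feasible \(X\) contains some \(F\triangle B\). Hence \(r_{\min}(D^{\ast\times|B})=\min_{F}|F\triangle B|\).
\item Every feasible \(X\) satisfies \(X\subseteq F\cup B\) for some \(F\in\mathcal F\). Hence \(r_{\max}(D^{\ast\times|B})\le\max_F|F\cup B|\).
\end{itemize}
Let \(F_1\) attain the minimum and \(F_2\) the maximum. Since \(|F_2\cup B|\le |F_2\cup F_1|+|B\setminus F_1|\) and \(|F_1\triangle B|\ge |B\setminus F_1|\), we get
\[
\omega(D^{\ast\times|B})\le |F_1\cup F_2|\le M:=\max_{F,F'\in\mathcal F}|F\cup F'|.
\]
So it suffices to exhibit a feasible \(F_0\) with \(\omega(D^{\ast\times|F_0})\ge M\).

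\textbf{Step 3 (attaining \(M\)).} For any feasible \(F_0\), the set \(\emptyset=F_0\triangle F_0\) is feasible in \(D^{\ast\times|F_0}\), so \(r_{\min}=0\). It remains to find a feasible \(X\) of size \(M\). Fix a pair \(F_0,F_2\) attaining \(M\), put \(T=F_2\setminus F_0\) and \(X=F_0\cup F_2=T\cup F_0\). By Step 1, the sets \(F'\in\mathcal F\) counted for \(X\) are exactly those with \(F'\setminus F_0=T\); the part \(F'\cap F_0\) is unrestricted. Every such \(F'\) again satisfies \(|F'\cup F_0|=M\). Thus \(X\) is feasible exactly when \(N:=\#\{F'\in\mathcal F: F'\setminus F_0=T\}\) is odd.

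\textbf{Main obstacle: the parity of \(N\).} I would handle it by choosing the pair extremally. Among all pairs attaining \(M\) (and over the admissible \(T\)), I would pick \(F_0\) so that \(N\) is odd, exploiting the freedom to swap the roles of \(F_0\) and \(F_2\).

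If this fails, the fallback is a Möbius-inversion argument on \(\{F'\cap F_0\}\). For \(Y\subseteq F_0\), the count attached to \(T\cup Y\) is \(f(Y)=\#\{F': F'\setminus F_0=T,\ F_0\setminus F'\subseteq Y\}\). This function is not identically even: take \(Y=F_0\setminus S\) with \(S\) inclusion-maximal among the sets \(F'\cap F_0\), and then \(f(Y)=1\). The difficulty is then to get a large \(Y\), which requires controlling \(|S|\) through the choice of \(F_0\). For instance, one can choose \(F_0\) minimal with respect to inclusion among sets in maximizing pairs and then argue that \(S\) can be taken small.

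I expect this parity step to be the only real difficulty; Steps 1 and 2 are routine bookkeeping.
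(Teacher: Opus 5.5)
Your Steps 1 and 2 are correct. Step 1 is the Brijder--Hoogeboom parity description, and the bound $\omega(D^{\ast\times|B})\le M$ is valid. The problem is Step 3: it aims at a false statement, so the proposal cannot be completed as planned, because $M=\max_{F,F'\in\mathcal F}|F\cup F'|$ is not attained in general.

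Take $E=\{a,b,c\}$ and $\mathcal F=\{\{a,b\},\{b,c\},\{a,c\}\}$. This is the uniform matroid $U_{2,3}$, which is binary and ribbon-graphic, hence vf-safe, so this is not an artifact of general set systems. Here $M=3$. For $F_0=\{a,b\}$ and $T=\{c\}$, both $\{a,c\}$ and $\{b,c\}$ satisfy $F'\setminus F_0=T$, so $N=2$. By symmetry, every maximizing pair, taken in either order, has $N=2$.

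Direct computation gives:
\begin{itemize}
\item $\mathcal F(D^{\ast\times|\{a\}})=\{\{b\},\{c\},\{a,b\},\{a,c\},E\}$;
\item $\mathcal F(D^{\ast\times|\{a,b\}})=\{\emptyset,\{a\},\{b\},\{a,b\},\{a,c\},\{b,c\}\}$;
\item $\mathcal F(D^{\ast\times|E})=\{\{a\},\{b\},\{c\},E\}$.
\end{itemize}
By symmetry, the maximum partial-$\ast\times$ width is $2<M$.

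So the parity obstruction you flagged is genuine, not technical. For fixed $U$ with $|U|=M$, $N(F_0)$ is the degree of $F_0$ in the graph on $\{F\in\mathcal F: F\subseteq U\}$ with $F\sim F'$ iff $F\cup F'=U$. Nothing forces an odd-degree vertex; here the graph is a triangle. Neither swapping roles nor your M\"obius fallback can rescue this: in the example the fallback only produces sets of size $M-1$.

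The missing idea is to compare each $B$ with a \emph{nearest} feasible set rather than with a global quantity; this is the paper's route. Let $F_1$ minimize $|F\triangle B|$ and put $d=|F_1\triangle B|$. Your Step 2 already gives $r_{\min}(D^{\ast\times|B})=d$ and $r_{\min}(D^{\ast\times|F_1})=0$. So it suffices to show
\[
r_{\max}(D^{\ast\times|F_1})\ge r_{\max}(D^{\ast\times|B})-d .
\]
Move from $B$ to $F_1$ one element at a time. Each step applies $\ast\times$ (when adding an element) or its inverse $\times\ast$ (when removing one) at a single element of the current system. For any proper set system $D'$, such a single-element operation lowers $r_{\max}$ by at most one. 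Let $G\in\mathcal F_{\max}(D')$:
\begin{itemize}
\item if $e\notin G$, then $G\cup\{e\}$ is feasible afterwards;
\item if $e\in G$, then $G\setminus\{e\}$ is feasible afterwards (for $\ast\times$), or one of $G$, $G\setminus\{e\}$ is (for $\times\ast$).
\end{itemize}
The paper runs the same path argument, phrased by observing that the toggled element has primal type $p$ in the current system, since $F_1\triangle A_i$ remains a minimum feasible set there. It then reads off non-decrease of width from Table~\ref{tab:widths}.
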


\begin{proof}
It suffices to show that for every \(A\subseteq E\), there exists a feasible set
\(F\in\mathcal F\) such that
\[
        \omega(D^{\ast\times|F})
        \geq
        \omega(D^{\ast\times|A}).
\]

Fix \(A\subseteq E\). Choose a feasible set \(F\in\mathcal F\) such that
\(|F\triangle A|\)
is as small as possible among all feasible sets of \(D\). We shall construct a sequence of subsets
\[
        A=A_0,A_1,\ldots,A_m=F
\]
by moving from \(A\) to \(F\) one element at a time, and we will show that the
\(\ast\times\)-width does not decrease along this sequence. 

Suppose that \(A_i\neq F\). Choose \(f\in A_i\triangle F\), and set
\[
        A_{i+1}=A_i\triangle\{f\}.
\]
Then \(A_{i+1}\) differs from \(A_i\) only in the element \(f\), and
\[
        |A_{i+1}\triangle F|=|A_i\triangle F|-1.
\]

We first observe that \(F\) is still a closest feasible set to \(A_i\). By construction, \(A\triangle A_i\) and \(A_i\triangle F\) form a disjoint partition
of \(A\triangle F\). Therefore
\[
        |A\triangle F|
        =
        |A\triangle A_i|+|A_i\triangle F|.
\]
If some feasible set \(F'\) satisfied
\[
        |F'\triangle A_i|<|F\triangle A_i|,
\]
then 
\[
        |F'\triangle A|
        \leq
        |F'\triangle A_i|+|A_i\triangle A|
        <
        |F\triangle A_i|+|A_i\triangle A|
        =
        |F\triangle A|,
\]
contradicting the choice of \(F\).

For the current \(i\), set
\( H_i=D^{\ast\times|A_i}.\)
We claim that \(f\) has primal type \(p\) in \(H_i\). Indeed, the feasible sets of \(D^{\ast|A_i}\) are precisely
\(\{M\triangle A_i\mid M\in\mathcal F\}\).
Since \(F\) is closest to \(A_i\), the set $F\triangle A_i$
is a minimum feasible set of \(D^{\ast| A_i}\).
We now apply loop complementation on \(A_i\). By the definition of loop
complementation, minimum feasible sets are preserved. Hence
\[
        F\triangle A_i\in
        \mathcal F_{\min}\bigl((D^{\ast|A_i})^{\times|A_i}\bigr).
\]
Therefore \(F\triangle A_i\) is a minimum feasible set of
\[H_i=D^{\ast\times|A_i}=(D^{\ast|A_i})^{\times|A_i}.\]
Since \(f\in F\triangle A_i\), there exists a minimum feasible set of \(H_i\)
containing \(f\). Hence, by Lemma \ref{lemma6} , the primal type of \(f\) in \(H_i\) is \(p\).
By Table \ref{tab:widths}, if an element
has primal type \(p\), then applying either \(\ast\times\) or \(\times\ast\) to that
element does not decrease the width.
There are two cases.

If \(f\in F\setminus A_i\), then $A_{i+1}=A_i\cup\{f\}.$
Since \(f\notin A_i\), applying \(\ast\times\) to \(f\) adds \(f\) to the partial
\(\ast\times\)-set. Hence $D^{\ast\times|A_{i+1}} =H_i^{\ast\times|f}.$
Therefore
\[
        \omega(D^{\ast\times|A_{i+1}})
        =
        \omega(H_i^{\ast\times|f})
        \geq
        \omega(H_i)
        =
        \omega(D^{\ast\times|A_i}).
\]

If \(f\in A_i\setminus F\), then $ A_{i+1}=A_i\setminus\{f\}.$
Since \(f\in A_i\), the operation \(\ast\times\) has already been applied to \(f\).
To remove it, we apply the inverse operation. Because $(\ast\times)^{-1}=\times\ast,$
we have $ D^{\ast\times|A_{i+1}}
        =
        H_i^{\times\ast|f}.$
Hence
\[
        \omega(D^{\ast\times|A_{i+1}})
        =
        \omega(H_i^{\times\ast|f})
        \geq
        \omega(H_i)
        =
        \omega(D^{\ast\times|A_i}).
\]

Thus in either case, replacing \(A_i\) by \(A_{i+1}\) moves one step closer to \(F\)
and does not decrease the width. After finitely many steps, we reach
       $ A_m=F.$
Therefore
\[
        \omega(D^{\ast\times|F})
        =
        \omega(D^{\ast\times|A_m})
        \geq
        \omega(D^{\ast\times|A_0})
        =
        \omega(D^{\ast\times|A}).
\]
Since \(A\subseteq E\) was arbitrary, the theorem follows.
\end{proof}

\begin{lemma}[\cite{BrijderHoogeboom2011}]
\label{lem:star-cross-star-parity}
Let \(D=(E,\mathcal F)\) be a proper set system and let \(A,Z\subseteq E\). Then
\(Z\in\mathcal F(D^{\ast\times\ast|A})\) if and only if
\[
        \left|
        \{X\in\mathcal F(D)\mid Z\subseteq X\subseteq Z\cup A\}
        \right|
\] is odd.
\end{lemma}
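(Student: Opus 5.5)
We prove the lemma by reducing \(\ast\times\ast\) to a loop complementation of the dual. The starting identity is
\[
D^{\ast\times\ast|A}=\bigl((D^{\ast|A})^{\times|A}\bigr)^{\ast|A}.
\]
Operations on distinct elements commute, so the whole statement reduces to the single-element case. The proof then becomes induction on \(|A|\), tracking how feasible sets transform.

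\textbf{Single element.} First we describe \(D^{\times|e}\) explicitly. By definition, \(\mathcal F' = \mathcal F\triangle\{F\cup\{e\}\mid F\in\mathcal F,\ e\notin F\}\). Hence:
\begin{itemize}
\item a set \(Z\) with \(e\notin Z\) is feasible in \(D^{\times|e}\) iff \(Z\in\mathcal F\);
\item a set \(Z\) with \(e\in Z\) is feasible iff exactly one of \(Z\), \(Z\setminus\{e\}\) lies in \(\mathcal F\).
\end{itemize}
In parity form: \(Z\ni e\) is feasible iff \(|\{X\in\mathcal F\mid Z\setminus\{e\}\subseteq X\subseteq Z\}|\) is odd.

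Now conjugate by the twist on \(e\). A set \(Z\) is feasible in \(D^{\ast\times\ast|e}\) iff \(Z\triangle\{e\}\) is feasible in \((D^{\ast|e})^{\times|e}\). There are two cases.
\begin{itemize}
\item If \(e\in Z\), then \(Z\triangle\{e\}\) avoids \(e\). So \(Z\) is feasible iff \(Z\triangle\{e\}\in\mathcal F(D^{\ast|e})\), i.e. iff \(Z\in\mathcal F\). This matches the count over \(Z\subseteq X\subseteq Z\cup\{e\}=Z\), which is just \([Z\in\mathcal F]\).
\item If \(e\notin Z\), then \(W=Z\cup\{e\}\) contains \(e\). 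So \(Z\) is feasible iff exactly one of \(W,\ W\setminus\{e\}\) is feasible in \(D^{\ast|e}\). These correspond to \(Z\) and \(Z\cup\{e\}\) being in \(\mathcal F\). This is exactly oddness of \(|\{X\in\mathcal F\mid Z\subseteq X\subseteq Z\cup\{e\}\}|\).
\end{itemize}
This proves the lemma when \(|A|\le 1\).

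\textbf{Induction.} Write \(A=A'\cup\{e\}\) and set \(D'=D^{\ast\times\ast|A'}\), so that \(D^{\ast\times\ast|A}=D'^{\ast\times\ast|e}\). By the single-element case, \(Z\) is feasible in \(D^{\ast\times\ast|A}\) iff the number of \(Y\in\mathcal F(D')\) with \(Z\subseteq Y\subseteq Z\cup\{e\}\) is odd. Working mod \(2\) and applying the induction hypothesis to each such \(Y\), this count is congruent to
\[
\sum_{Z\subseteq Y\subseteq Z\cup\{e\}}\ \bigl|\{X\in\mathcal F\mid Y\subseteq X\subseteq Y\cup A'\}\bigr| \pmod 2.
\]
The intervals \([Y,Y\cup A']\) for \(Y\in\{Z,\ Z\cup\{e\}\}\) are disjoint. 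When \(e\notin Z\), they partition \([Z,Z\cup A]\). When \(e\in Z\), the sum has only one term, and \([Z,Z\cup A']=[Z,Z\cup A]\). In both cases the sum equals \(|\{X\in\mathcal F\mid Z\subseteq X\subseteq Z\cup A\}|\), which completes the induction.

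The main obstacle is bookkeeping. The single-element parity description of \(\times\) must be stated correctly, and the induction must run over \(e\notin Z\) and \(e\in Z\) uniformly (together with the case \(e\in A'\cap Z\)). The interval-partition step is what makes the parities add.
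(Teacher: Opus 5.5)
Your proof is correct. The paper gives no proof of this lemma; it quotes it from Brijder and Hoogeboom, so your induction is a self-contained alternative rather than a reconstruction of an argument in the paper.

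Your single-element computation, conjugating the explicit description of $D^{\times|e}$ by the twist on $e$, is right. The inductive step also works. Since $e\notin A'$, the intervals $[Z,Z\cup A']$ and $[Z\cup\{e\},Z\cup A]$ partition $[Z,Z\cup A]$ when $e\notin Z$. When $e\in Z$ there is a single term, and $Z\cup A'=Z\cup A$. Properness is never used, so applying the one-element case to $D'$ is harmless.

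Within the paper's own toolkit there is a much shorter route, and it is the one your first sentence announces but never carries out.
\begin{itemize}
\item By Lemma~\ref{lem:dual-loop-dualpivot}, $Z\in\mathcal F(D^{\ast\times\ast|A})$ if and only if $E\setminus Z\in\mathcal F((D^\ast)^{\times|A})$. That lemma's proof uses only commutation of single-element operations, so there is no circularity.
\item By Lemma~\ref{lem:loop-parity}, this holds if and only if the number of $Y\in\mathcal F(D^\ast)$ with $(E\setminus Z)\setminus A\subseteq Y\subseteq E\setminus Z$ is odd.
\item Substituting $X=E\setminus Y$ turns this into exactly the condition $Z\subseteq X\subseteq Z\cup A$ with $X\in\mathcal F(D)$.
\end{itemize}
That route takes two lines, but it relies on the cited loop-complementation parity formula. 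Proving that formula requires the same single-element-plus-interval induction you carried out. Your version derives everything directly from the definition of loop complementation.

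Two cosmetic points. Your reference to ``the case $e\in A'\cap Z$'' is vacuous, since $e\notin A'$ by construction; the interval argument already handles $A'\cap Z\neq\emptyset$ automatically. Your opening sentence should either be dropped or replaced by the duality argument above.
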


\begin{theorem} \label{thm:star-cross-star-feasible-max} Let \(D=(E,\mathcal F)\) be a proper set system. Then there exists a feasible set \(F\in\mathcal F\) such that \[ \omega(D^{\ast\times\ast|F}) = \max_{B\subseteq E}\omega(D^{\ast\times\ast|B}). \] 
\end{theorem}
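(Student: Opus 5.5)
The plan is to imitate the proof of Theorem~\ref{thm:star-cross-feasible-max}. Fix \(A\subseteq E\) and choose a feasible set \(F\in\mathcal F\) with \(|F\triangle A|\) minimum. Walk from \(A\) to \(F\) one element at a time, \(A=A_0,A_1,\ldots,A_m=F\), with \(A_{i+1}=A_i\triangle\{f\}\) for some \(f\in A_i\triangle F\). The argument given there shows, verbatim, that \(F\) remains a closest feasible set to every \(A_i\).

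The operation \(\ast\times\ast\) is an involution on each element: it is the transposition in \(S_3\) fixing the \(\times\)-orbit structure. Hence in both cases (\(f\) added or \(f\) removed) we have
\[
D^{\ast\times\ast|A_{i+1}}=H_i^{\ast\times\ast|f},\qquad\text{where } H_i=D^{\ast\times\ast|A_i}.
\]
This is simpler than the \(\ast\times\) case, since no inverse needs to be distinguished. By Table~\ref{tab:widths}, the column for \(\ast\times\ast\) is non-negative exactly for the types whose primal type is \(p\) or \(u\). So it suffices to prove the following claim: \emph{\(f\) does not have primal type \(t\) in \(H_i\).}

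To prove the claim I will use Lemma~\ref{lem:star-cross-star-parity} to describe the small feasible sets of \(H_i\). First I would show that \(r(H_{i,\min})=|F\setminus A_i|\), where \(H_{i,\min}\) denotes \((H_i)_{\min}\).
\begin{itemize}[label={}]
\item \emph{Lower bound.} If \(Z\) is feasible in \(H_i\), then some \(X\in\mathcal F\) satisfies \(Z\subseteq X\subseteq Z\cup A_i\). Hence \(|Z|\ge|X\setminus A_i|\).
\item \emph{Comparison with \(F\).} Closeness of \(F\) to \(A_i\) gives \(|X\setminus A_i|+|A_i\setminus X|\ge |F\setminus A_i|+|A_i\setminus F|\). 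This does not immediately give \(|X\setminus A_i|\ge|F\setminus A_i|\).
\end{itemize}
So I expect to refine the choice of \(F\). Among the sets closest to \(A\), I would instead take \(F\) lexicographically minimizing \((|F\setminus A|,\,|A\setminus F|)\), or more naturally minimizing \(|F\setminus A|\) first. I would then recheck that this choice is inherited by every \(A_i\) along the path, using the same disjoint-partition argument as before, applied separately to the two parts \(F\setminus A\) and \(A\setminus F\).

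Next, for an element \(f\) of \(F\setminus A_i\), I would show that \(f\) lies in a minimum feasible set of \(H_i\), giving type \(p\). For an element \(f\) of \(A_i\setminus F\), I would show that \(f\) lies in no feasible set of \(H_i\) of size \(r(H_{i,\min})\) or \(r(H_{i,\min})+1\), giving type \(u\). In both cases the membership test is the parity count in Lemma~\ref{lem:star-cross-star-parity} over the interval \([Z,Z\cup A_i]\) of \(\mathcal F\).

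The main obstacle is exactly this parity count: sets of the form \(Z=X\setminus A_i\) are only feasible in \(H_i\) when an odd number of feasible sets share that trace outside \(A_i\). The first thing I would try is to choose \(Z\) minimal and use a counting argument on the whole interval, rather than a single witness \(X\). If a minimal trace has even multiplicity, the parity lemma forces some smaller or adjacent \(Z'\) to have odd count, and I would exploit this to locate \(f\) appropriately.

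If this direct route becomes unwieldy, the fallback is the identity
\[
D^{\ast\times\ast|A}=\bigl((D^{\ast})^{\times|A}\bigr)^{\ast},
\]
which holds since operations on distinct elements commute and \(\ast\) is an involution. Together with \(\omega(D^\ast)=\omega(D)\), this reduces the statement to a \(\times\)-attainment statement on \(D^\ast\) with the target set being a feasible set of \(D\), equivalently a complement of a feasible set of \(D^\ast\). That reduced statement should be easier to handle, since loop complementation preserves \(\mathcal F_{\min}\).

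Once the claim is established, the chain of inequalities \(\omega(D^{\ast\times\ast|A_{i+1}})\ge\omega(D^{\ast\times\ast|A_i})\) yields \(\omega(D^{\ast\times\ast|F})\ge\omega(D^{\ast\times\ast|A})\) exactly as in Theorem~\ref{thm:star-cross-feasible-max}. Since \(A\) is arbitrary, the theorem follows.
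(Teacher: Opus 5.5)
\textbf{There is a genuine gap: choosing $F$ by proximity to $A$ is the wrong criterion, and no tie-breaking rule rescues it.} Your reduction in the second paragraph is fine: $\ast\times\ast$ is an involution, and primal type $p$ or $u$ gives a non-negative change.

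Take the binary, hence vf-safe, delta-matroid $D=(\{a,b\},\{\emptyset,\{a\},\{b\}\})$ and $A=\{a,b\}$. By Lemma~\ref{lem:star-cross-star-parity}, $\mathcal F(D^{\ast\times\ast|A})=\{\emptyset,\{a\},\{b\}\}$, which has width $1$. On the other hand, $\mathcal F(D^{\ast\times\ast|\{a\}})=\{\{a\},\{b\}\}$ has width $0$, and symmetrically for $\{b\}$. The feasible sets closest to $A$ are $\{a\}$ and $\{b\}$. They are also the lexicographic minimizers of $(|F\setminus A|,|A\setminus F|)$. So every variant of your choice ends at a set of strictly smaller width.

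Concretely, consider the one-step walk $A_0=\{a,b\}\to A_1=\{a\}$. The element $b\in A_0\setminus F$ has primal type $t$ in $H_0$: it misses the unique minimum feasible set $\emptyset$ but lies in $\{b\}\in\mathcal F_{\min+1}(H_0)$. So your claim that elements of $A_i\setminus F$ have type $u$ is false. Your formula $r((H_i)_{\min})=|F\setminus A_i|$ also fails at $A_1=F$. It predicts $0$, but the true value is $1$, because $F=\{a\}$ has an even number of feasible subsets. The parity count you flag as ``the main obstacle'' is therefore not a technicality; it is exactly where the strategy breaks.

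\textbf{The missing idea is inclusion-minimality rather than closeness.} The paper fixes $Z\in\mathcal F_{\min}(D^{\ast\times\ast|A})$. The family of $X\in\mathcal F$ with $Z\subseteq X\subseteq Z\cup A$ has odd, hence positive, size, and the paper takes an inclusion-minimal $F$ in it. Then $F$ is the only feasible set between $Z$ and $Z\cup F=F$, so $Z\in\mathcal F(D^{\ast\times\ast|F})$, and hence $r((D^{\ast\times\ast|F})_{\min})\le r((D^{\ast\times\ast|A})_{\min})$. Combined with the invariance $r((D^{\ast\times\ast|B})_{\max})=r(D_{\max})$ for every $B$, this gives $\omega(D^{\ast\times\ast|F})\ge\omega(D^{\ast\times\ast|A})$ directly, with no walk. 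In fact any inclusion-minimal feasible $F$ makes $\emptyset$ feasible in $D^{\ast\times\ast|F}$, so the maximum is simply $r(D_{\max})$.

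Your duality fallback could be completed in the same spirit. Let $B=E\setminus Y$ with $Y$ an inclusion-maximal feasible set of $D^\ast$; then $B\in\mathcal F$. Since $Y$ is the only feasible set of $D^\ast$ containing $Y$, we get $E\in\mathcal F((D^\ast)^{\times|B})$. As loop complementation preserves minimum feasible sets, $(D^\ast)^{\times|B}$ then has the largest possible width. As written, however, you only assert that the reduced statement ``should be easier'', so it does not supply the argument.
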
 
\begin{proof} We first note that \[ r({D^{\ast\times\ast|A}}_{\max})=r(D_{\max}) \] for every \(A\subseteq E\). Indeed, by Lemma~\ref{lem:star-cross-star-parity}, if \(Z\in\mathcal F(D^{\ast\times\ast|A})\), then there exists \(X\in\mathcal F(D)\) such that \(Z\subseteq X\). Hence \(|Z|\leq r(D_{\max})\). Therefore \[ r({D^{\ast\times\ast|A}}_{\max})\leq r(D_{\max}). \] Conversely, if \(Z\in\mathcal F_{\max}(D)\), then \[ \{X\in\mathcal F(D)\mid Z\subseteq X\subseteq Z\cup A\} \] contains only \(Z\). Hence its cardinality is odd. By Lemma~\ref{lem:star-cross-star-parity},  \(Z\in\mathcal F(D^{\ast\times\ast|A})\). 
Thus  \[ r({D^{\ast\times\ast|A}}_{\max})\geq r(D_{\max}). \] Therefore \[ r({D^{\ast\times\ast|A}}_{\max})=r(D_{\max}).\]

Now fix \(A\subseteq E\), and choose \( Z\in\mathcal F_{\min}(D^{\ast\times\ast|A}). \) By Lemma~\ref{lem:star-cross-star-parity}, the set \[ \mathcal I = \{X\in\mathcal F(D)\mid Z\subseteq X\subseteq Z\cup A\} \] has odd cardinality. Choose \(F\in\mathcal I\) such that there is no \(F'\in\mathcal I\) with
\(F'\subsetneq F\). Then \(F\in\mathcal F(D)\). We claim that \(Z\in\mathcal F(D^{\ast\times\ast|F})\). Indeed, applying Lemma~\ref{lem:star-cross-star-parity} with \(F\) in place of \(A\), we need to consider \[ \{X\in\mathcal F(D)\mid Z\subseteq X\subseteq Z\cup F\}. \] Since \(Z\subseteq F\), we have \[ \{X\in\mathcal F(D)\mid Z\subseteq X\subseteq F\}. \] By the inclusion-minimality of \(F\), this set contains only \(F\). Hence it has odd cardinality, and so \( Z\in\mathcal F(D^{\ast\times\ast|F}). \) Therefore \[ r({D^{\ast\times\ast|F}}_{\min}) \leq |Z| = r({D^{\ast\times\ast|A}}_{\min}). \] 
Together with \[ r({D^{\ast\times\ast|F}}_{\max}) = r(D_{\max}) = r({D^{\ast\times\ast|A}}_{\max}), \] this gives \[ \omega(D^{\ast\times\ast|F}) \geq \omega(D^{\ast\times\ast|A}). \] Since \(A\subseteq E\) was arbitrary, the maximum of \(\omega(D^{\ast\times\ast|B})\) over all \(B\subseteq E\) is attained at some feasible set \(F\in\mathcal F\).
\end{proof}

The analogous feasible-set attainment property does not hold for \(\times\) or
\(\times\ast\).

\begin{example}
\label{ex:cross-and-cross-star-no-feasible-attainment}
Let \(D=(\{e\},\{\emptyset\}).\)
Then \(D\) is a normal binary delta-matroid, and hence is vf-safe. Moreover,
\(\omega(D)=0. \)
For both \(\sigma=\times\) and \(\sigma=\times\ast\), we have
\[
        D^{\sigma|e}=(\{e\},\{\emptyset,\{e\}\}),
\]
and hence
\[
        \omega(D^{\sigma|e})=1.
\]
Therefore
\[
        \max_{B\subseteq\{e\}}\omega(D^{\sigma|B})=1
\]
for \(\sigma=\times\) and for \(\sigma=\times\ast\), and in both cases the maximum is
attained only when \(B=\{e\}\). However,
\[
        \{e\}\notin\mathcal F(D).
\]
Thus the feasible-set attainment property fails for both \(\times\) and
\(\times\ast\), even for vf-safe delta-matroids.
\end{example}

\section{Monotone maximum width sequences for non-twist partial-twuality operations}
\label{sec:monotone-star-cross-cross-star}

\begin{lemma}[\cite{BoninChunNoble2021}] 
\label{lem:sandwich} 
Let \(D=(E,\mathcal F)\) be a delta-matroid. For every feasible set \(F\in\mathcal F\), there exist \( F_{\min}\in\mathcal F_{\min}(D), \ F_{\max}\in\mathcal F_{\max}(D), \) 
such that \[ F_{\min}\subseteq F\subseteq F_{\max}. \] \end{lemma}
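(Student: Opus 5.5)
The plan is to derive the sandwich property from the symmetric exchange axiom alone, proving the two inclusions separately and symmetrically. By passing to the dual \(D^\ast=D^{\ast|E}\), which is again a delta-matroid, the upper inclusion \(F\subseteq F_{\max}\) is the complement of the lower inclusion for \(E\setminus F\) in \(D^\ast\). Indeed, complementation swaps inclusion and interchanges \(\mathcal F_{\min}\) with \(\mathcal F_{\max}\). So it suffices to show: for every \(F\in\mathcal F\) there is \(F_{\min}\in\mathcal F_{\min}(D)\) with \(F_{\min}\subseteq F\).

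For the lower inclusion, I would argue by induction on \(|F|-r(D_{\min})\). If \(F\) is already minimum there is nothing to do. Otherwise it is enough to find a feasible \(F'\subsetneq F\); the induction hypothesis applied to \(F'\) then yields a minimum feasible set inside \(F'\subseteq F\).

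To produce \(F'\), I would choose \(M\in\mathcal F_{\min}(D)\) with \(|M\triangle F|\) as small as possible. Since \(|M|<|F|\), there is some \(u\in F\setminus M\). Applying symmetric exchange to the pair \((F,M)\) and the element \(u\) gives \(v\in F\triangle M\) with \(F\triangle\{u,v\}\in\mathcal F\). There are three cases.
\begin{itemize}
\item If \(v=u\), or if \(v\in F\setminus M\) with \(v\neq u\), then \(F\triangle\{u,v\}\) is a proper subset of \(F\), as required.
\item If \(v\in M\setminus F\), then \(F\triangle\{u,v\}\) has the same size as \(F\) and is strictly closer to \(M\), so this case does not immediately give a subset.
\end{itemize}

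The last case is the main obstacle. To handle it, I would strengthen the induction so that the exchange is run from the other side. Apply the exchange axiom to the pair \((M,F)\) with an element \(w\in M\setminus F\), if one exists. This gives \(z\) with \(M\triangle\{w,z\}\in\mathcal F\).
\begin{itemize}
\item If \(z=w\), or \(z\in M\setminus F\), the resulting set is smaller than \(M\), contradicting minimality of \(M\).
\item Hence \(z\in F\setminus M\), and \(M':=M\triangle\{w,z\}\) is a minimum feasible set with \(|M'\triangle F|=|M\triangle F|-2\), contradicting the choice of \(M\).
\end{itemize}
Therefore \(M\setminus F=\emptyset\), that is, \(M\subseteq F\) directly. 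This second argument actually makes the first one unnecessary: a minimum feasible set closest to \(F\) is automatically contained in \(F\).

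The upper inclusion follows by the same argument applied to \(D^\ast\), or directly by taking \(N\in\mathcal F_{\max}(D)\) closest to \(F\) and using the symmetric exchange on \(N\) with an element of \(F\setminus N\). Finally, the two constructions are independent, so \(F_{\min}\) and \(F_{\max}\) exist simultaneously for the same \(F\).
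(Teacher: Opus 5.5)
The paper does not prove this lemma; it simply quotes it from Bonin, Chun and Noble. Your proposal is a correct, self-contained proof that uses only the symmetric exchange axiom, so it takes a different route from the paper (which has none of its own).

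Your core argument is complete. You choose \(M\in\mathcal F_{\min}(D)\) minimizing \(|M\triangle F|\) and apply exchange to the pair \((M,F)\) at some \(w\in M\setminus F\). Then \(z=w\) or \(z\in M\setminus F\) contradicts the minimality of \(|M|\). The remaining case \(z\in F\setminus M\) produces a minimum feasible set that is two steps closer to \(F\), contradicting the choice of \(M\).

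The upper inclusion also follows correctly, in either of the two ways you give:
\begin{itemize}
\item by duality, since twisting by \(E\) preserves the exchange axiom and the minimum feasible sets of \(D^\ast\) are exactly the complements of the maximum feasible sets of \(D\);
\item by the mirror-image exchange at an element of \(F\setminus N\).
\end{itemize}

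In a write-up, delete the first attempt, where you exchange from \(F\) toward \(M\), together with the remark about ``strengthening the induction.'' The closest-set argument is a direct contradiction, not an induction, and it proves the lemma on its own.

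Compared with the citation, your route buys three things:
\begin{itemize}
\item It makes the paper self-contained.
\item It shows that nothing beyond the exchange axiom is needed; in particular, vf-safeness is not used.
\item It gives a slightly sharper statement: \emph{every} minimum feasible set nearest to \(F\) lies inside \(F\), and dually every nearest maximum feasible set contains \(F\).
\end{itemize}
It is also the same closest-feasible-set device the paper itself uses in the proof of Theorem~\ref{thm:star-cross-feasible-max}.
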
 

\begin{lemma}[\cite{BrijderHoogeboom2011}]
\label{lem:loop-parity}
Let \(D=(E,\mathcal F)\) be a proper set system, and let \(A,Z\subseteq E\). Then
\(Z\in\mathcal F(D^{\times|A})\)
if and only if
\[\left| \{X\in\mathcal F\mid Z\setminus A\subseteq X\subseteq Z\} \right|
\]
is odd.
\end{lemma}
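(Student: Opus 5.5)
We argue by induction on \(|A|\), working throughout modulo \(2\) with indicator functions. Write \(\chi_{\mathcal G}(Z)\in\{0,1\}\) for the indicator of \(Z\in\mathcal G\). The claim is then
\[
        \chi_{\mathcal F(D^{\times|A})}(Z)\equiv\sum_{Z\setminus A\subseteq X\subseteq Z}\chi_{\mathcal F}(X)\pmod 2 .
\]
For \(A=\emptyset\) the sum has the single term \(X=Z\), so there is nothing to prove.

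\emph{Single-element case \(A=\{e\}\).} By definition \(\mathcal F(D^{\times|e})=\mathcal F\triangle\{F\cup\{e\}\mid F\in\mathcal F,\ e\notin F\}\).
\begin{itemize}
\item If \(e\notin Z\), then \(Z\) is not among the added sets, since each of them contains \(e\). Hence \(Z\in\mathcal F(D^{\times|e})\) if and only if \(Z\in\mathcal F\). The interval \(Z\setminus\{e\}\subseteq X\subseteq Z\) is just \(\{Z\}\), so the formula holds.
\item If \(e\in Z\), then \(Z\) lies in the symmetric difference exactly when precisely one of \(Z\in\mathcal F\) and \(Z\setminus\{e\}\in\mathcal F\) holds. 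So \(\chi_{\mathcal F(D^{\times|e})}(Z)\equiv\chi_{\mathcal F}(Z)+\chi_{\mathcal F}(Z\setminus\{e\})\), which is the sum over the interval \(\{Z\setminus\{e\},Z\}\).
\end{itemize}
This uses nothing about \(D\) beyond being a set system. Properness is never needed, since the formula is a pure counting identity.

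\emph{Inductive step.} Let \(A=A'\cup\{e\}\) with \(e\notin A'\). Since loop complementations on distinct elements commute, \(D^{\times|A}=(D^{\times|A'})^{\times|e}\). Apply the single-element case to \(D^{\times|A'}\), and then the induction hypothesis to each resulting term:
\[
        \chi_{\mathcal F(D^{\times|A})}(Z)\equiv\sum_{Z\setminus\{e\}\subseteq X\subseteq Z}\ \sum_{X\setminus A'\subseteq Y\subseteq X}\chi_{\mathcal F}(Y)\pmod 2 .
\]
It remains to check that the pairs \((X,Y)\) indexing this double sum correspond bijectively to the sets \(Y\) with \(Z\setminus A\subseteq Y\subseteq Z\).
\begin{itemize}
\item If \(e\notin Z\), then \(X=Z\) is forced. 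Since \(Z\setminus A'=Z\setminus A\), the inner range is exactly the desired interval.
\item If \(e\in Z\), then \(X\in\{Z,\,Z\setminus\{e\}\}\).
  \begin{itemize}
  \item For \(X=Z\), since \(e\notin A'\) we have \(e\in Z\setminus A'\). The inner sum therefore runs over the \(Y\) with \(Z\setminus A\subseteq Y\subseteq Z\) and \(e\in Y\).
  \item For \(X=Z\setminus\{e\}\), we have \(X\setminus A'=Z\setminus A\), since \(e\notin X\) and \(A=A'\cup\{e\}\). The inner sum therefore runs over the \(Y\) with \(Z\setminus A\subseteq Y\subseteq Z\) and \(e\notin Y\).
  \end{itemize}
  These two ranges are disjoint, and their union is the full interval.
\end{itemize}
Hence the double sum equals \(\sum_{Z\setminus A\subseteq Y\subseteq Z}\chi_{\mathcal F}(Y)\), completing the induction.

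The only delicate point is this bookkeeping in the inductive step. The key fact is that \(e\notin A'\) forces \(e\in Z\setminus A'\) whenever \(e\in Z\). This is what makes the two inner intervals split the full interval \(\{Y\mid Z\setminus A\subseteq Y\subseteq Z\}\) according to whether \(e\in Y\), with no overlap and nothing missed.
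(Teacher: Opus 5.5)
Your proof is correct. The paper gives no argument of its own for this lemma; it simply cites it from Brijder and Hoogeboom. Your induction therefore supplies a self-contained proof of a statement the paper only imports, so there is no in-paper route to compare against.

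The individual steps check out:
\begin{itemize}
\item In the single-element case, the only possible witness for \(Z\) among the added sets \(\{F\cup\{e\}\mid F\in\mathcal F,\ e\notin F\}\) is \(F=Z\setminus\{e\}\). This is the one small step you leave implicit.
\item In the inductive step with \(e\in Z\), the inner interval for \(X=Z\) is \(\{Y : (Z\setminus A)\cup\{e\}\subseteq Y\subseteq Z\}\), and the inner interval for \(X=Z\setminus\{e\}\) is \(\{Y : Z\setminus A\subseteq Y\subseteq Z\setminus\{e\}\}\). These do partition the full interval \(\{Y: Z\setminus A\subseteq Y\subseteq Z\}\).
\item Replacing each \(\chi_{\mathcal F(D^{\times|A'})}(X)\) by a congruent sum modulo \(2\) is legitimate.
\end{itemize}

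You are right that properness plays no role. You also do not need to assume that loop complementations on distinct elements commute. If you define \(D^{\times|A}\) by applying the single-element operations along an arbitrary ordering of \(A\) and induct on the length of that ordering, peeling off the last element, you get a formula that depends only on the set \(A\). That formula itself proves the order-independence the paper takes for granted when it says \(D^{\times|A}\) is well defined.
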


Let \(D\) be a proper set system, let \(\sigma\in\{\ast\times,\times\ast\}\), and let \(e\in E(D)\). We call \(e\) \(\sigma\)-decreasing in \(D\) if \[ \omega(D^{\sigma|e})<\omega(D). \] 

\begin{lemma}
\label{lem:decreasing-type}
Let \(D=(E,\mathcal F)\) be a proper set system, let
\(\sigma\in\{\ast\times,\times\ast\}\), and let \(e\in E\). If \(e\) is
\(\sigma\)-decreasing in \(D\), then \(e\) belongs to no minimum feasible set of
\(D\), and belongs to every maximum feasible set of \(D\).
\end{lemma}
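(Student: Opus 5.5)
The plan is to derive the lemma directly from Table~\ref{tab:widths} together with the characterizations of primal and dual types in Proposition~\ref{lemma6}. The statement has two halves. ``$e$ belongs to no minimum feasible set'' says that the primal type of $e$ is not $p$. ``$e$ belongs to every maximum feasible set'' says that the dual type of $e$ is not $p$. So we must show that a $\sigma$-decreasing element has primal type in $\{u,t\}$ and dual type in $\{u,t\}$.

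First, read off the columns of Table~\ref{tab:widths} for $\ast\times$ and $\times\ast$. For $\sigma=\ast\times$, the entry $\omega(D^{\sigma|e})-\omega(D)$ is negative exactly for the types $uu$ $(-1)$, $ut$ $(-2)$ and $tt$ $(-1)$. For $\sigma=\times\ast$, it is negative exactly for $uu$ $(-1)$, $tu$ $(-2)$ and $tt$ $(-1)$. In both columns, every type with a $p$ in either position gives a nonnegative change. Explicitly, $pp,pu,pt,up,tp$ give $+2,+1,0,0,+1$ under $\ast\times$ and $+2,0,+1,+1,0$ under $\times\ast$. Hence if $e$ is $\sigma$-decreasing, its type lies in $\{u,t\}\times\{u,t\}$.

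Next, translate back via Proposition~\ref{lemma6}. By part~(1), the primal type is $p$ if and only if some $F\in\mathcal F_{\min}(D)$ contains $e$. Since the primal type is not $p$, no minimum feasible set contains $e$. By part~(4), the dual type is $p$ if and only if some $F\in\mathcal F_{\max}(D)$ omits $e$. Since the dual type is not $p$, every maximum feasible set contains $e$.

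The only real concern is that Table~\ref{tab:widths} is stated for general proper set systems, as the lemma requires. I would take it as established earlier, as the paper does. If a self-contained check were needed, the hardest step would be verifying the five $p$-containing rows for each $\sigma$. For primal type $p$, I would argue as in the proof of Theorem~\ref{thm:star-cross-feasible-max}. There a minimum feasible set $F\ni e$ becomes $F\setminus e$ after twisting, so the minimum drops by one. The subsequent loop complementation keeps minimum feasible sets and cannot decrease the maximum, because it only adds or removes sets containing $e$ whose companion without $e$ survives. For dual type $p$, the symmetric argument applies to maximum feasible sets, using $(\ast\times)^{-1}=\times\ast$ and duality.
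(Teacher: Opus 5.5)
Your proposal is correct and is exactly the paper's proof. The negative entries of the $\ast\times$ and $\times\ast$ columns of Table~\ref{tab:widths} occur only for types in $\{uu,ut,tt\}$ and $\{uu,tu,tt\}$ respectively, and parts (1) and (4) of Proposition~\ref{lemma6} turn ``primal and dual type not $p$'' into the two stated conclusions.

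One caution about your optional self-contained aside: the claim that the loop complementation ``cannot decrease the maximum'' is false even in that context. For $\mathcal F=\{\{e\},\{f\},\{e,f\}\}$, the set system $D^{\ast|e}$ has maximum size $2$, but $D^{\ast\times|e}=(\{e,f\},\{\emptyset,\{e\},\{f\}\})$ has maximum size $1$. What you actually need, and can prove, is that $D^{\ast\times|e}$ has maximum size at least $r(D_{\max})-1$. This holds because, for any $M\in\mathcal F_{\max}(D)$, either $M\setminus\{e\}$ (when $e\in M$) or $M\cup\{e\}$ (when $e\notin M$) survives both steps.
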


\begin{proof}
By Table~\ref{tab:widths}, the width decreases under \(\ast\times\) precisely for
types \(\{uu,\ ut, tt\},\)
and decreases under \(\times\ast\) precisely for types \(\{uu, tu, tt\}.
\) Hence, if \(e\) is \(\sigma\)-decreasing for some
\(\sigma\in\{\ast\times,\times\ast\}\), then the primal type of \(e\) is either
\(u\) or \(t\), and the dual type of \(e\) is either \(u\) or \(t\).

By Proposition~\ref{lemma6}, primal type \(u\) or \(t\) means that \(e\) belongs to
no minimum feasible set of \(D\), while dual type \(u\) or \(t\) means that \(e\)
belongs to every maximum feasible set of \(D\). This proves the lemma.
\end{proof}

\begin{lemma} 
\label{lem:bad-set} 
Let \(D=(E,\mathcal F)\) be a delta-matroid, and let \( \sigma\in\{\ast\times,\times\ast\}. \) Let \(A\subseteq E\). Suppose every element of \(A\) is \(\sigma\)-decreasing in \(D\). Then \[ \omega(D^{\sigma|A})\leq \omega(D). \] 
\end{lemma}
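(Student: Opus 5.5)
The plan is to bound the largest and smallest feasible sets of \(D^{\sigma|A}\) directly, using only the structure of \(A\) supplied by Lemma~\ref{lem:decreasing-type} together with the sandwich property of Lemma~\ref{lem:sandwich}. Since every element of \(A\) is \(\sigma\)-decreasing, Lemma~\ref{lem:decreasing-type} gives two facts. First, \(A\cap F_{\min}=\emptyset\) for every \(F_{\min}\in\mathcal F_{\min}(D)\). Second, \(A\subseteq F_{\max}\) for every \(F_{\max}\in\mathcal F_{\max}(D)\). The goal is to show that every feasible set \(W\) of \(D^{\sigma|A}\) satisfies
\[
r(D_{\min})\le |W|\le r(D_{\max}),
\]
which immediately gives \(\omega(D^{\sigma|A})\le\omega(D)\). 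The key step is to attach to each such \(W\) a feasible set \(X\in\mathcal F(D)\) with
\[
X\setminus A\subseteq W\subseteq X\cup A .
\]
Given this, Lemma~\ref{lem:sandwich} yields \(F_{\min}\subseteq X\subseteq F_{\max}\) for some \(F_{\min}\in\mathcal F_{\min}(D)\) and \(F_{\max}\in\mathcal F_{\max}(D)\). Because \(F_{\min}\cap A=\emptyset\), we get \(F_{\min}\subseteq X\setminus A\subseteq W\), so \(|W|\ge r(D_{\min})\). Because \(A\subseteq F_{\max}\), we get \(W\subseteq X\cup A\subseteq F_{\max}\), so \(|W|\le r(D_{\max})\).

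For \(\sigma=\times\ast\), we have \(D^{\times\ast|A}=(D^{\times|A})^{\ast|A}\), so \(W=Z\triangle A\) with \(Z\in\mathcal F(D^{\times|A})\). By Lemma~\ref{lem:loop-parity}, the family \(\{X\in\mathcal F\mid Z\setminus A\subseteq X\subseteq Z\}\) has odd cardinality, so it is nonempty; pick \(X\) in it. Then \(X\setminus A\subseteq Z\setminus A\subseteq X\), so \(Z\setminus A=X\setminus A\). Consequently
\[
W=(Z\setminus A)\cup(A\setminus Z)
\]
contains \(X\setminus A\) and is contained in \(X\cup A\), as required.

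For \(\sigma=\ast\times\), we have \(D^{\ast\times|A}=(D^{\ast|A})^{\times|A}\). Apply Lemma~\ref{lem:loop-parity} to the set system \(D^{\ast|A}\). For a feasible \(W\), this gives some \(Y=F\triangle A\) with \(F\in\mathcal F\) and \(W\setminus A\subseteq Y\subseteq W\). Put \(X=F\). Then
\[
X\setminus A=Y\setminus A\subseteq W\quad\text{and}\quad W\subseteq Y\cup A=F\cup A=X\cup A .
\]

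The only delicate point is this bookkeeping: checking, for each order of composition, that the loop-complementation parity lemma really produces a witness \(X\in\mathcal F(D)\) in the sandwich \(X\setminus A\subseteq W\subseteq X\cup A\), in particular for \(\times\ast\). After that, the argument is just Lemma~\ref{lem:sandwich} plus the two facts from Lemma~\ref{lem:decreasing-type}. The delta-matroid hypothesis is used only through Lemma~\ref{lem:sandwich}.
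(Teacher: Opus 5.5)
Your proposal is correct and follows essentially the same route as the paper. Lemma~\ref{lem:decreasing-type} gives \(F_{\min}\cap A=\emptyset\) and \(A\subseteq F_{\max}\); Lemma~\ref{lem:loop-parity}, applied to \(D^{\ast|A}\) in the \(\ast\times\) case and to \(D\) with \(Z\triangle A\) in the \(\times\ast\) case, produces a witness in \(\mathcal F(D)\); and Lemma~\ref{lem:sandwich} then squeezes each feasible set of \(D^{\sigma|A}\) between a minimum and a maximum feasible set of \(D\). Your unified condition \(X\setminus A\subseteq W\subseteq X\cup A\) just packages the paper's two separate case computations more tidily, and your verification of it in both cases is correct.
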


\begin{proof} 
By Lemma~\ref{lem:decreasing-type}, for every
\(F_{\min}\in\mathcal F_{\min}(D)\) and every
\(F_{\max}\in\mathcal F_{\max}(D)\), we have
\(F_{\min}\cap A=\emptyset\) and  \(A\subseteq F_{\max}\).
Let \(Z\in\mathcal F(D^{\sigma|A})\). We distinguish two cases.

If \(\sigma=\ast\times\), then \[ D^{\ast\times|A}=(D^{\ast|A})^{\times|A}. \] By Lemma~\ref{lem:loop-parity}, applied to \(D^{\ast|A}\), there exists \(X\in\mathcal F(D^{\ast|A})\) such that \[ Z\setminus A\subseteq X\subseteq Z. \] 
Since \(X\in\mathcal F(D^{\ast|A})\),  there exists
\(F\in\mathcal F(D)\) such that \(X=F\triangle A.\)

If \(\sigma=\times\ast\), then
\[
        D^{\times\ast|A}=(D^{\times|A})^{\ast|A}.
\]
Since \(Z\in\mathcal F(D^{\times\ast|A})\), we have \(Z\triangle A\in \mathcal F(D^{\times|A})\). Set \(Y=Z\triangle A\).
By Lemma~\ref{lem:loop-parity}, there exists \(F\in\mathcal F(D)\) such that
\[
        Y\setminus A\subseteq F\subseteq Y.
\]

In either case, we have obtained a feasible set \(F\in\mathcal F(D)\).
 By Lemma~\ref{lem:sandwich}, there exist \(F_{\min}\in\mathcal F_{\min}(D)\) and  \(F_{\max}\in\mathcal F_{\max}(D)\) such that \[ F_{\min}\subseteq F\subseteq F_{\max}. \] It remains to prove that
\[F_{\min}\subseteq Z\subseteq F_{\max}.\] 

Suppose first that \(\sigma=\ast\times\). Since \(F_{\min}\cap A=\emptyset\), we have \[ F_{\min}\subseteq F\setminus A\subseteq F\triangle A=X\subseteq Z. \] Also, since \(Z\setminus A\subseteq X\subseteq Z\), we have \(Z\setminus X\subseteq A\), and hence \[ Z\subseteq X\cup A=(F\triangle A)\cup A=F\cup A\subseteq F_{\max}. \] 

Suppose next that \(\sigma=\times\ast\). Since \(F_{\min}\cap A=\emptyset\) and \(F_{\min}\subseteq F\subseteq Y\), we have \[ F_{\min}\subseteq Y\setminus A=(Z\triangle A)\setminus A=Z\setminus A \subseteq Z. \] For the upper bound, from \(Y\setminus A\subseteq F\subseteq Y\), we have \(Y\setminus F\subseteq A\), and hence \[ Y\subseteq F\cup A\subseteq F_{\max}. \] Since \(Z=Y\triangle A\subseteq Y\cup A\), it follows that \( Z\subseteq F_{\max}. \) 

Thus, in both cases, \[ F_{\min}\subseteq Z\subseteq F_{\max}. \] Therefore \[ r(D_{\min})\leq |Z|\leq r(D_{\max}). \] Since \(Z\in\mathcal F(D^{\sigma|A})\) was arbitrary, every feasible set of \(D^{\sigma|A}\) has size between \(r(D_{\min})\) and \(r(D_{\max})\). Hence \[ \omega(D^{\sigma|A})\leq r(D_{\max})-r(D_{\min})=\omega(D). \]
\end{proof}

\begin{lemma}
\label{lem:accessibility} 

Let \(D=(E,\mathcal F)\) be a delta-matroid, let $\sigma\in\{\ast\times,\times\ast\}$, and let \(S\subseteq E\). Choose \(A\subseteq S\) such that
\[
        \omega(D^{\sigma|A})
        =
        \max_{B\subseteq S}\omega(D^{\sigma|B}),
\]
and such that no proper subset of \(A\) satisfies this equality.
If \(A\neq\emptyset\), then there exists \(e\in A\) such that \[\omega(D^{\sigma|e})\geq \omega(D).\] 
\end{lemma}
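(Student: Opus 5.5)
Argue by contradiction, using Lemma~\ref{lem:bad-set}. Suppose \(A\neq\emptyset\) and that no \(e\in A\) satisfies \(\omega(D^{\sigma|e})\geq\omega(D)\). Then every element of \(A\) is \(\sigma\)-decreasing in \(D\). Since \(D\) is a delta-matroid, Lemma~\ref{lem:bad-set} applies to \(D\) and the set \(A\), and gives
\[
        \omega(D^{\sigma|A})\leq\omega(D)=\omega(D^{\sigma|\emptyset}).
\]

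We then compare with the choice of \(A\). Since \(\emptyset\subseteq S\), we have \(\omega(D^{\sigma|\emptyset})\leq\max_{B\subseteq S}\omega(D^{\sigma|B})=\omega(D^{\sigma|A})\). Combining the two inequalities, \(\emptyset\) also attains the maximum over subsets of \(S\). But \(\emptyset\) is a proper subset of \(A\) because \(A\neq\emptyset\), which contradicts the minimality in the choice of \(A\). Hence some \(e\in A\) is not \(\sigma\)-decreasing, that is, \(\omega(D^{\sigma|e})\geq\omega(D)\).

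The main point to check is that the hypotheses of Lemma~\ref{lem:bad-set} hold as stated: it needs only that \(D\) is a delta-matroid (for Lemma~\ref{lem:sandwich}) and that every element of \(A\) is decreasing in \(D\) itself, not in intermediate twualities. Both hold here, so no vf-safe assumption is needed at this step. Beyond that, the argument is a direct bookkeeping of the minimality condition, and I do not expect a genuine obstacle.
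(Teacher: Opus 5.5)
Your proof is correct and is essentially the paper's own argument. You assume every element of \(A\) is \(\sigma\)-decreasing, use Lemma~\ref{lem:bad-set} to get \(\omega(D^{\sigma|A})\le\omega(D)\), and conclude that \(\emptyset\) also attains the maximum, which contradicts the minimality of \(A\); your remark that only the delta-matroid hypothesis is needed matches how the paper states Lemma~\ref{lem:bad-set}.
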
 

\begin{proof} 
Suppose, for contradiction, that $\omega(D^{\sigma|e})<\omega(D)$ for every \(e\in A\). Then every element of \(A\) is \(\sigma\)-decreasing in \(D\). By Lemma~\ref{lem:bad-set}, we have \[ \omega(D^{\sigma|A})\leq \omega(D). \] Since \(\emptyset\subseteq S\) and \(D^{\sigma|\emptyset}=D\), we have \[ \omega(D) = \omega(D^{\sigma|\emptyset}) \leq \max_{B\subseteq S}\omega(D^{\sigma|B}) = \omega(D^{\sigma|A}). \] Thus \[ \omega(D^{\sigma|A})=\omega(D). \] Hence, $\emptyset $ also attains the maximum over all subsets of \(S\). Since \(\emptyset\subsetneq A\), this contradicts the choice of \(A\). Therefore there exists \(e\in A\) such that \[ \omega(D^{\sigma|e})\geq \omega(D). \] 
\end{proof} 

\begin{theorem}
\label{thm:restricted} 

Let \(D=(E,\mathcal F)\) be a vf-safe delta-matroid, let $\sigma\in\{\ast\times,\times\ast\},$  and let  \(S\subseteq E\). Then there exists \(A\subseteq S\), together with an ordering
\(e_1,\ldots,e_k\) of the elements of \(A\),  such that \[ \omega(D^{\sigma|A}) = \max_{B\subseteq S}\omega(D^{\sigma|B}), \] and the sequence \[ \omega(D),\, \omega(D^{\sigma|e_1}),\, \omega(D^{\sigma|\{e_1,e_2\}}),\ldots,\, \omega(D^{\sigma|A}) \] is non-decreasing. 
\end{theorem}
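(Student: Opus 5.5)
Induction on \(|S|\), driven by Lemma~\ref{lem:accessibility}. The vf-safe hypothesis is what makes this work: every \(D^{\sigma|B}\) is again a vf-safe delta-matroid, so Lemma~\ref{lem:accessibility} can be applied to it as well as to \(D\).

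\textbf{Base case.} If \(S=\emptyset\), take \(A=\emptyset\); the sequence is just \(\omega(D)\).

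\textbf{Choice of \(A\).} For general \(S\), pick \(A\subseteq S\) attaining \(M:=\max_{B\subseteq S}\omega(D^{\sigma|B})\) that is inclusion-minimal among maximizers. If \(A=\emptyset\) we are done. Otherwise Lemma~\ref{lem:accessibility} gives \(e_1\in A\) with \(\omega(D^{\sigma|e_1})\geq\omega(D)\).

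\textbf{Passing to \(D'\).} Set \(D'=D^{\sigma|e_1}\), which is vf-safe, and \(S'=S\setminus\{e_1\}\). Since operations on distinct elements commute, \(D'^{\sigma|B'}=D^{\sigma|B'\cup\{e_1\}}\) for every \(B'\subseteq S'\). Hence
\[
\max_{B'\subseteq S'}\omega(D'^{\sigma|B'})=\max_{e_1\in B\subseteq S}\omega(D^{\sigma|B})=M,
\]
where the last equality holds because \(A\) contains \(e_1\) and attains \(M\), and \(M\) is the maximum over all subsets of \(S\).

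\textbf{Induction step.} Apply the induction hypothesis to \(D'\) and \(S'\). This gives \(A'\subseteq S'\) with an ordering \(e_2,\dots,e_k\) such that \(\omega(D'^{\sigma|A'})=M\) and the intermediate widths of \(D'\) are non-decreasing. Put \(A^\ast=A'\cup\{e_1\}\) with ordering \(e_1,e_2,\dots,e_k\). Then \(\omega(D^{\sigma|A^\ast})=M\). The full sequence starts with \(\omega(D)\leq\omega(D^{\sigma|e_1})=\omega(D')\) and continues with the non-decreasing sequence for \(D'\). The set \(A^\ast\) may differ from \(A\), but the statement only asks for some maximizing subset, so this is fine.

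\textbf{Main point to check.} The only nontrivial step is the first move \(e_1\), which Lemma~\ref{lem:accessibility} supplies; it rests on Lemma~\ref{lem:bad-set}, which needs \(D\) to be a delta-matroid. The thing to verify is that the hypotheses carry through the recursion. The vf-safe property is closed under twists and loop complementations, so it passes to \(D'\). Lemma~\ref{lem:accessibility} needs only that \(D'\) be a delta-matroid, which vf-safety guarantees. With these points checked, the induction closes.
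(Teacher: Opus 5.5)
Your proposal is correct and follows the paper's proof essentially step for step. Both argue by induction on $|S|$ and take an inclusion-minimal maximizer $A$. Lemma~\ref{lem:accessibility} then supplies a first element $e\in A$ with $\omega(D^{\sigma|e})\geq\omega(D)$. One passes to the vf-safe delta-matroid $D^{\sigma|e}$ with ground subset $S\setminus\{e\}$, uses commutativity to see that the maximum value is unchanged, and prepends $e$ to the sequence given by the induction hypothesis.
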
 

\begin{proof}
We prove the statement by induction on \(|S|\). The induction hypothesis is assumed for all vf-safe delta-matroids and all subsets of size smaller than \(|S|\).

If \(S=\emptyset\), then the only possible choice is \(A=\emptyset\), and the conclusion is immediate. Now assume \(S\neq\emptyset\), and assume the theorem is known for all subsets
of smaller cardinality.
Choose \(A\subseteq S\) such that
\[
        \omega(D^{\sigma|A})
        =
        \max_{B\subseteq S}\omega(D^{\sigma|B}),
\]
and such that no proper subset of \(A\) satisfies this equality.
If \(A=\emptyset\), then the empty sequence proves the theorem. Suppose \(A\neq\emptyset\). By Lemma~\ref{lem:accessibility}, there exists
\(e\in A\) such that
\[
        \omega(D^{\sigma|e})\geq \omega(D).
\]
Set \(D_1=D^{\sigma|e}\) and \(S_1=S\setminus\{e\}.
\)
Since \(D\) is vf-safe and \(\sigma\) is a word in twists and loop complementations,
\(D_1\) is also vf-safe.

Let \(A_1=A\setminus\{e\}.\)
We claim that \(A_1\) attains the maximum of \(\omega(D_1^{\sigma|B})\) over all
\(B\subseteq S_1\). Indeed, for every \(B\subseteq S_1\), operations on distinct elements commute, so
\[
        D_1^{\sigma|B}
        =
        (D^{\sigma|e})^{\sigma|B}
        =
        D^{\sigma|(\{e\}\cup B)}.
\]
Since \(\{e\}\cup B\subseteq S\), we have
\[
        \omega(D_1^{\sigma|B})
        =
        \omega(D^{\sigma|(\{e\}\cup B)})
        \leq
        \max_{C\subseteq S}\omega(D^{\sigma|C})
        =
        \omega(D^{\sigma|A}).
\]
Hence
\[
        \max_{B\subseteq S_1}\omega(D_1^{\sigma|B})
        \leq
        \omega(D^{\sigma|A}).
\]
On the other hand,
\[
        D_1^{\sigma|A_1}
        =
        (D^{\sigma|e})^{\sigma|A_1}
        =
        D^{\sigma|A}.
\]
Therefore
\[
        \omega(D_1^{\sigma|A_1})
        =
        \omega(D^{\sigma|A}).
\]
Since \(A_1\subseteq S_1\), we also have
\[
        \omega(D_1^{\sigma|A_1})
        \leq
        \max_{B\subseteq S_1}\omega(D_1^{\sigma|B}).
\]
Thus
\[
        \omega(D_1^{\sigma|A_1})
        =
        \max_{B\subseteq S_1}\omega(D_1^{\sigma|B})
        =
        \omega(D^{\sigma|A}).
\]

Now apply the induction hypothesis to the vf-safe delta-matroid \(D_1\) and the
subset \(S_1\). There exists a subset \(C\subseteq S_1\), with an ordering
\[
        C=\{f_1,\ldots,f_m\},
\]
such that
\[
        \omega(D_1^{\sigma|C})
        =
        \max_{B\subseteq S_1}\omega(D_1^{\sigma|B})
        =
        \omega(D^{\sigma|A}),
\]
and the sequence
\[
        \omega(D_1),\,
        \omega(D_1^{\sigma|f_1}),\,
        \omega(D_1^{\sigma|\{f_1,f_2\}}),\ldots,\,
        \omega(D_1^{\sigma|C})
\]
is non-decreasing. Since
\[
        \omega(D_1)=\omega(D^{\sigma|e})\geq \omega(D),
\]
the sequence
\(\{e,\ f_1,\ldots,f_m\}\)
satisfies
\[
        \omega(D)
        \leq
        \omega(D^{\sigma|e})
        =
        \omega(D_1)
        \leq
        \omega(D_1^{\sigma|f_1})
        \leq
        \cdots
        \leq
        \omega(D_1^{\sigma|C}).
\]
Since 
\( D_1^{\sigma|C}=D^{\sigma|(\{e\}\cup C)},\)
we have
\[
        \omega(D^{\sigma|(\{e\}\cup C)})
        =
        \max_{B\subseteq S}\omega(D^{\sigma|B}).
\]
Therefore the subset
\(\{e\}\cup C\subseteq S\)
with the ordering \(\{e,f_1,\ldots,f_m\}\) has the required properties. This completes the induction.
\end{proof}

\begin{corollary}
\label{cor:star-cross-cross-star-global}
Let \(D=(E,\mathcal F)\) be a vf-safe delta-matroid, and let
\(\sigma\in\{\ast\times,\times\ast\}\). Then there exists a subset \(A\subseteq E\), together with an ordering
\(e_1,\ldots,e_k\) of the elements of \(A\), such that
\[
        \omega(D^{\sigma|A})
        =
        \max_{B\subseteq E}\omega(D^{\sigma|B}),
\]
and the sequence
\[
        \omega(D),\,
        \omega(D^{\sigma|e_1}),\,
        \omega(D^{\sigma|\{e_1,e_2\}}),\ldots,\,
        \omega(D^{\sigma|A})
\]
is non-decreasing.
\end{corollary}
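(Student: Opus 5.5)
This corollary is the special case \(S=E\) of Theorem~\ref{thm:restricted}, so the plan is simply to specialize that result. Let \(D\) be a vf-safe delta-matroid and \(\sigma\in\{\ast\times,\times\ast\}\). Apply Theorem~\ref{thm:restricted} with \(S=E\). It produces a subset \(A\subseteq E\) and an ordering \(e_1,\ldots,e_k\) of \(A\) satisfying two conditions:
\[
        \omega(D^{\sigma|A})=\max_{B\subseteq E}\omega(D^{\sigma|B}),
\]
and the sequence \(\omega(D),\omega(D^{\sigma|e_1}),\ldots,\omega(D^{\sigma|A})\) is non-decreasing. These are exactly the two requirements of the corollary.

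There is no real obstacle. The only point to note is that the hypotheses match: Theorem~\ref{thm:restricted} needs \(D\) to be a vf-safe delta-matroid, \(\sigma\) to lie in \(\{\ast\times,\times\ast\}\), and \(S\) to be an arbitrary subset of \(E\). All of these hold when \(S=E\).

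The substantive work has already been done earlier. The induction in Theorem~\ref{thm:restricted} relies on the accessibility step, Lemma~\ref{lem:accessibility}. That step uses Lemma~\ref{lem:bad-set}, which says that a set consisting entirely of \(\sigma\)-decreasing elements cannot raise the width. Vf-safeness is what allows the induction to continue, since it ensures \(D^{\sigma|e}\) is again a vf-safe delta-matroid.
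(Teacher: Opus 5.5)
Your proposal is correct and matches the paper's proof exactly: the corollary is obtained by applying Theorem~\ref{thm:restricted} with \(S=E\), whose hypotheses are clearly satisfied. Your summary of the underlying machinery (Lemma~\ref{lem:accessibility}, Lemma~\ref{lem:bad-set}, and vf-safeness keeping the induction inside delta-matroids) is also accurate.
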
 

\begin{proof}
Apply Theorem~\ref{thm:restricted} with \(S=E\).
\end{proof}

We have so far discussed the partial-\(\ast\times\) and partial-\(\times\ast\)
operations. For completeness, we briefly record the corresponding monotone
statements for the remaining two operations \(\times\) and \(\ast\times\ast\).
Recall that
\[
        \ast\times\ast=\times\ast\times
\]
in the \(S_3\) action generated by twist and loop complementation.

\begin{lemma}
\label{lem:bad-set-loop}
Let \(D=(E,\mathcal F)\) be a vf-safe delta-matroid, and let \(A\subseteq E\).
Suppose that for every \(e\in A\), \(\omega(D^{\times|e})<\omega(D).\)
Then
\[
        \omega(D^{\times|A})\leq \omega(D).
\]
\end{lemma}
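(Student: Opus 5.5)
The plan is to follow the pattern of Lemma~\ref{lem:bad-set}: first read off from Table~\ref{tab:widths} what a $\times$-decreasing element looks like, and then squeeze every feasible set of $D^{\times|A}$ between the sizes $r(D_{\min})$ and $r(D_{\max})$. By the $\times$ column of Table~\ref{tab:widths}, $\omega(D^{\times|e})<\omega(D)$ happens exactly for the types $pt$, $ut$ and $tt$. So the hypothesis says that every $e\in A$ has dual type $t$. By Proposition~\ref{lemma6}(6), $e$ then lies in every maximum feasible set of $D$. Hence
\[
A\subseteq F_{\max}\quad\text{for every } F_{\max}\in\mathcal F_{\max}(D).
\]
Unlike Lemma~\ref{lem:bad-set}, the primal type is now unrestricted. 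The lower bound will therefore have to come from the structure of loop complementation itself, not from $A$ avoiding the minimum feasible sets.

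For the lower bound, take any $Z\in\mathcal F(D^{\times|A})$. By Lemma~\ref{lem:loop-parity}, the family $\{X\in\mathcal F\mid Z\setminus A\subseteq X\subseteq Z\}$ has odd cardinality, so it is nonempty. Fix such an $X$. Since $X\in\mathcal F$ and $X\subseteq Z$, we get $|Z|\ge|X|\ge r(D_{\min})$. Thus $r\bigl((D^{\times|A})_{\min}\bigr)\ge r(D_{\min})$. In fact equality holds, because a minimum feasible set $Z$ of $D$ is the only member of its own family, but only the inequality is needed.

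For the upper bound, keep the same $X$. By Lemma~\ref{lem:sandwich}, which applies since $D$ is a delta-matroid, there is $F_{\max}\in\mathcal F_{\max}(D)$ with $X\subseteq F_{\max}$. From $Z\setminus A\subseteq X$ we get
\[
Z\subseteq X\cup A\subseteq F_{\max}\cup A=F_{\max},
\]
using $A\subseteq F_{\max}$. Therefore $|Z|\le r(D_{\max})$.

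Combining the two bounds, every feasible set of $D^{\times|A}$ has size between $r(D_{\min})$ and $r(D_{\max})$. This gives $\omega(D^{\times|A})\le\omega(D)$.

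The only step needing care is the type classification: one must check that the decreasing entries of the $\times$ column are exactly the types with dual type $t$, and then apply Proposition~\ref{lemma6}(6). After that, the argument is a simpler version of Lemma~\ref{lem:bad-set}, with no twist involved. The vf-safe hypothesis is used only to guarantee that $D$ is a delta-matroid, so that Lemma~\ref{lem:sandwich} applies.
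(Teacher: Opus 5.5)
Your proposal is correct and follows the paper's proof essentially step for step. You read off from the $\times$ column of Table~\ref{tab:widths} that decreasing elements have dual type $t$, and you get $A\subseteq F_{\max}$ via Proposition~\ref{lemma6}. You then take $X\in\mathcal F$ with $Z\setminus A\subseteq X\subseteq Z$ from Lemma~\ref{lem:loop-parity}, and bound $|Z|$ between $r(D_{\min})$ and $r(D_{\max})$ using Lemma~\ref{lem:sandwich}. Your extra remark that the minimum rank is actually preserved is true but, as you note, not needed.
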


\begin{proof}
Set \( r=r(D_{\min})\) and 
\(R=r(D_{\max})\). By Table~\ref{tab:widths}, the width decreases under \(\times|e\) precisely for
elements of types
\(\{pt, ut, tt\}.\)
In all these cases, the dual type of \(e\) is \(t\). By Proposition \ref{lemma6}, every maximum feasible set of \(D\) contains \(e\). Hence, for every \(F_{\max}\in\mathcal F_{\max}(D)\),  \(A\subseteq F_{\max}.\)

Let \(Z\in\mathcal F(D^{\times|A}).\)
By Lemma \ref{lem:loop-parity}, there exists
\(X\in\mathcal F(D)\)
such that
\[
        Z\setminus A\subseteq X\subseteq Z.
\]
Hence,
\[
        |Z|\geq |X|\geq r.
\]
By Lemma~\ref{lem:sandwich}, 
there exists \(F_{\max}\in\mathcal F_{\max}(D)\) such that
\(X\subseteq F_{\max}.\)
Since \(A\subseteq F_{\max}\) and \(Z\setminus A\subseteq X\subseteq Z\), we have
\(Z\setminus X\subseteq A.\)
Therefore,
\[
        Z\subseteq X\cup A\subseteq F_{\max}.
\]
Hence,
\[
        |Z|\leq |F_{\max}|=R.
\]
Thus every feasible set \(Z\) of \(D^{\times|A}\) satisfies
\[
        r\leq |Z|\leq R.
\]
Consequently,
\[
        \omega(D^{\times|A})\leq R-r=\omega(D).
\]
\end{proof}

The same accessibility and induction argument used in
Theorem~\ref{thm:restricted} now gives the following result.

\begin{theorem}
\label{thm:loop-monotone}
Let \(D=(E,\mathcal F)\) be a vf-safe delta-matroid, and let \(S\subseteq E\).
Then there exists \(A\subseteq S\), with an ordering \(e_1,\ldots,e_k\) of the elements of \(A\)
such that
\[
        \omega(D^{\times|A})
        =
        \max_{B\subseteq S}\omega(D^{\times|B}),
\]
and the sequence
\[
        \omega(D),\,
        \omega(D^{\times|e_1}),\,
        \omega(D^{\times|\{e_1,e_2\}}),\ldots,\,
        \omega(D^{\times|A})
\]
is non-decreasing.
\end{theorem}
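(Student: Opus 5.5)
We mirror the proof of Theorem~\ref{thm:restricted}, with Lemma~\ref{lem:bad-set-loop} replacing Lemma~\ref{lem:bad-set}. The first step is the analogue of Lemma~\ref{lem:accessibility} for loop complementation. Fix \(S\subseteq E\) and choose \(A\subseteq S\) attaining \(\max_{B\subseteq S}\omega(D^{\times|B})\) that is inclusion-minimal among such maximizers. If \(A\neq\emptyset\), we claim some \(e\in A\) satisfies \(\omega(D^{\times|e})\ge\omega(D)\). Suppose instead that every \(e\in A\) has \(\omega(D^{\times|e})<\omega(D)\). Then Lemma~\ref{lem:bad-set-loop} gives \(\omega(D^{\times|A})\le\omega(D)\). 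Since \(\emptyset\subseteq S\) and \(D^{\times|\emptyset}=D\), we also have \(\omega(D)\le\omega(D^{\times|A})\). Hence \(\emptyset\) attains the maximum, contradicting the minimality of \(A\).

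Next we run induction on \(|S|\), over all vf-safe delta-matroids simultaneously. The case \(S=\emptyset\), and the case where the minimal maximizer is empty, are trivial. Otherwise, take \(e\) as above and set \(D_1=D^{\times|e}\), \(S_1=S\setminus\{e\}\). Then \(D_1\) is vf-safe, since vf-safety is closed under loop complementation by definition, so it is again a delta-matroid to which Lemma~\ref{lem:bad-set-loop} applies.

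Loop complementations on distinct elements commute, and \(e\notin B\) for \(B\subseteq S_1\), so \(D_1^{\times|B}=D^{\times|(\{e\}\cup B)}\). Therefore
\[
\max_{B\subseteq S_1}\omega(D_1^{\times|B})=\omega(D^{\times|A}).
\]
The inequality \(\le\) holds because each \(\{e\}\cup B\subseteq S\). The inequality \(\ge\) holds by taking \(B=A\setminus\{e\}\).

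The induction hypothesis, applied to \(D_1\) and \(S_1\), gives \(C=\{f_1,\dots,f_m\}\subseteq S_1\) with a non-decreasing width sequence starting at \(\omega(D_1)\) and ending at this maximum. Prepending \(e\) works, because \(\omega(D)\le\omega(D_1)\) and \(D_1^{\times|C}=D^{\times|(\{e\}\cup C)}\) with \(\{e\}\cup C\subseteq S\).

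One point needs care. Loop complementation is an involution, so \(\times\times\) is the identity, and this is exactly why we need \(e\notin B\) for the identity \(D_1^{\times|B}=D^{\times|(\{e\}\cup B)}\). The condition is guaranteed by restricting to \(S_1=S\setminus\{e\}\).

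The only substantive ingredient is Lemma~\ref{lem:bad-set-loop}, which is already available. We therefore expect no real obstacle; the main thing to check is that the vf-safe hypothesis, which that lemma needs in order to invoke Lemma~\ref{lem:sandwich}, is preserved at every stage of the induction.
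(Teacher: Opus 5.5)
Your proposal is correct and is the paper's proof. The paper reruns the accessibility-plus-induction argument of Theorem~\ref{thm:restricted} with Lemma~\ref{lem:bad-set-loop} in place of Lemma~\ref{lem:bad-set}, and you do exactly this, including checking that \(D^{\times|e}\) stays vf-safe and that \(D_1^{\times|B}=D^{\times|(\{e\}\cup B)}\) for \(B\subseteq S\setminus\{e\}\).
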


\begin{proof}
The proof is identical to the proof of Theorem~\ref{thm:restricted}, with
Lemma~\ref{lem:bad-set} replaced by Lemma~\ref{lem:bad-set-loop}.
\end{proof}

The operation \(\ast\times\ast\) follows from the \(\times\)-case by duality.

\begin{lemma}
\label{lem:dual-loop-dualpivot}
For every set system \(D=(E,\mathcal F)\) and every \(A\subseteq E\),
\[
        D^{\ast\times\ast|A}
        =
        \bigl((D^\ast)^{\times|A}\bigr)^\ast.
\]
Consequently,
\[
        \omega(D^{\ast\times\ast|A})
        =\omega((D^\ast)^{\times|A}).
\]
\end{lemma}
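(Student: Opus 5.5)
The identity $D^{\ast\times\ast|A}=\bigl((D^\ast)^{\times|A}\bigr)^\ast$ is purely formal. I would prove it by unwinding the definitions and using two facts: twists and loop complementations on distinct elements commute, and a twist on the same element is an involution.

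First I write the global dual as $D^\ast=D^{\ast|E}=D^{\ast|(E\setminus A)\,\ast|A}$. Operations on $E\setminus A$ commute with operations on $A$, so I can treat the two parts separately.

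Using the left-to-right word convention, I compute
\[
\bigl((D^\ast)^{\times|A}\bigr)^\ast=D^{\ast|E\;\times|A\;\ast|E}.
\]
On each element of $E\setminus A$ the applied word is $\ast\ast$, which is the identity. On each element of $A$ the applied word is $\ast\times\ast$. The result is therefore $D^{\ast\times\ast|A}$.

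To avoid relying on the informal "commutation" principle, I would also check the identity directly at the level of feasible sets. By Lemma~\ref{lem:star-cross-star-parity}, $Z\in\mathcal F(D^{\ast\times\ast|A})$ if and only if the number of $X\in\mathcal F(D)$ with $Z\subseteq X\subseteq Z\cup A$ is odd.

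For the right-hand side, $Z$ is feasible exactly when $E\setminus Z\in\mathcal F((D^\ast)^{\times|A})$. By Lemma~\ref{lem:loop-parity} applied to $D^\ast$, this holds exactly when the number of $Y\in\mathcal F(D^\ast)$ with $(E\setminus Z)\setminus A\subseteq Y\subseteq E\setminus Z$ is odd.

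Next I substitute $Y=E\setminus X$ with $X\in\mathcal F(D)$. The upper inclusion $E\setminus X\subseteq E\setminus Z$ becomes $Z\subseteq X$. The lower inclusion $E\setminus(Z\cup A)\subseteq E\setminus X$ becomes $X\subseteq Z\cup A$. The two parity conditions therefore coincide, which proves the set-system equality.

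The width statement follows because width is invariant under full duality. Complementation $F\mapsto E\setminus F$ sends sizes $k$ to $|E|-k$, so $r((D')^\ast_{\max})=|E|-r(D'_{\min})$ and $r((D')^\ast_{\min})=|E|-r(D'_{\max})$. Hence $\omega((D')^\ast)=\omega(D')$; I apply this with $D'=(D^\ast)^{\times|A}$.

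The only delicate point is bookkeeping: getting the word convention right and making sure $\ast|E$ splits as intended. The parity verification above settles this independently, so I expect no real obstacle.
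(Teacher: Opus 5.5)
Your proposal is correct and follows the paper's proof: both write $\ast$ as $\ast|E$, read off the per-element word ($\ast\times\ast$ on $A$, the cancelling $\ast\ast$ on $E\setminus A$), and then use that full duality preserves width. Your extra feasible-set check via Lemma~\ref{lem:loop-parity} and Lemma~\ref{lem:star-cross-star-parity} is also valid; it adds an independent confirmation of the identity that the paper does not include.
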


\begin{proof}
Since \(D^\ast=D^{\ast|E}\), we have
\[
        \bigl((D^\ast)^{\times|A}\bigr)^\ast
        =
        \bigl((D^{\ast|E})^{\times|A}\bigr)^{\ast|E}.
\]
On each element of \(A\), the operations applied are
\(\ast,\ \times,\ \ast,\)
and hence give \(\ast\times\ast\). On each element of \(E\setminus A\), the two
twists cancel. Therefore
\[
        \bigl((D^\ast)^{\times|A}\bigr)^\ast
        =
        D^{\ast\times\ast|A}.
\]
The equality of widths follows since duality preserves width.
\end{proof}

\begin{theorem}
\label{cor:dual-pivot-monotone}
Let \(D=(E,\mathcal F)\) be a vf-safe delta-matroid, and let \(S\subseteq E\).
Then there exists \(A\subseteq S\), with an ordering \(e_1,\ldots,e_k\) of the elements of \(A\)
such that
\[
        \omega(D^{\ast\times\ast|A})
        =
        \max_{B\subseteq S}\omega(D^{\ast\times\ast|B}),
\]
and the sequence
\[
        \omega(D),\,
        \omega(D^{\ast\times\ast|e_1}),\,
        \omega(D^{\ast\times\ast|\{e_1,e_2\}}),\ldots,\,
        \omega(D^{\ast\times\ast|A})
\]
is non-decreasing.
\end{theorem}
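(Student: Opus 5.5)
The plan is to reduce the statement to Theorem~\ref{thm:loop-monotone} applied to the dual \(D^\ast\), using Lemma~\ref{lem:dual-loop-dualpivot} to transfer widths. First, \(D^\ast=D^{\ast|E}\) is obtained from \(D\) by twists only. Since \(D\) is vf-safe, every sequence of twists and loop complementations applied to \(D^\ast\) is again a sequence applied to \(D\), and so yields a delta-matroid. Hence \(D^\ast\) is a vf-safe delta-matroid, and Theorem~\ref{thm:loop-monotone} applies to \(D^\ast\) and the same subset \(S\subseteq E\).

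Theorem~\ref{thm:loop-monotone} then provides \(A\subseteq S\) with an ordering \(e_1,\ldots,e_k\) such that
\[
\omega\bigl((D^\ast)^{\times|A}\bigr)=\max_{B\subseteq S}\omega\bigl((D^\ast)^{\times|B}\bigr),
\]
and such that the sequence \(\omega(D^\ast),\omega((D^\ast)^{\times|e_1}),\ldots,\omega((D^\ast)^{\times|A})\) is non-decreasing. Next I translate each term. By Lemma~\ref{lem:dual-loop-dualpivot}, applied with each prefix \(\{e_1,\ldots,e_j\}\) and with each \(B\subseteq S\), we have \(\omega((D^\ast)^{\times|X})=\omega(D^{\ast\times\ast|X})\) for every \(X\subseteq E\). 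For \(X=\emptyset\) this reads \(\omega(D^\ast)=\omega(D)\). Indeed, twisting by \(E\) maps each feasible set \(F\) to \(E\setminus F\), which exchanges the largest and smallest sizes, so the width is unchanged.

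Substituting these equalities turns the maximality statement into
\[
\omega(D^{\ast\times\ast|A})=\max_{B\subseteq S}\omega(D^{\ast\times\ast|B}),
\]
and turns the monotone sequence into exactly the sequence displayed in the theorem.

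There is no real obstacle here. The only points that need care are the vf-safety of \(D^\ast\) and the fact that the term-by-term identification uses Lemma~\ref{lem:dual-loop-dualpivot} with the same sets \(A\), \(B\) and prefixes on both sides. Both follow directly from the definitions and from the fact that operations on distinct elements commute.
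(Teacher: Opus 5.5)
Your proposal is correct and is essentially the paper's proof: apply Theorem~\ref{thm:loop-monotone} to the vf-safe dual \(D^\ast\) with the same \(S\), then transfer the maximality statement and each term of the sequence via Lemma~\ref{lem:dual-loop-dualpivot}, using \(\omega(D^\ast)=\omega(D)\) for the initial term. You also spell out why \(D^\ast\) is vf-safe and the \(X=\emptyset\) identification, both of which the paper leaves implicit.
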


\begin{proof}
Since \(D\) is vf-safe, its dual \(D^\ast\) is also vf-safe. Apply
Theorem~\ref{thm:loop-monotone} to \(D^\ast\) and \(S\). Then use
Lemma~\ref{lem:dual-loop-dualpivot} and the fact that duality preserves width.
\end{proof}

\section{Consequences for ribbon graphs} \label{sec:ribbon-graphs} 

In this section we translate our delta-matroid results into ribbon graphs. For background on ribbon graphs, partial duality, and Petriality, we refer the reader to \cite{Ellis-Monaghan2013}.

Let \(G\) be a ribbon graph with edge set \(E(G)\). Recall that the delta-matroid associated with \(G\) is \( D(G)=(E(G),\mathcal F(G)), \) where \[ \mathcal F(G) = \{F\subseteq E(G)\mid F \text{ is the edge set of a spanning quasi-tree of }G\}. \] Thus feasible sets of \(D(G)\) are precisely edge sets of spanning quasi-trees of \(G\) \cite{ChunMoffattNobleRueckriemen2019PLMS}. We use the standard ribbon-group notation: \(\delta\) denotes partial duality and \(\tau\) denotes partial Petriality. Ribbon-graphic delta-matroids are vf-safe \cite{BoninChunNoble2021}. Moreover, for every \(A\subseteq E(G)\), partial duality corresponds to twist, and partial Petriality corresponds to loop complementation \cite{ChunMoffattNobleRueckriemen2019PLMS}: \[ D(G^{\delta|A})=D(G)^{\ast|A}, \qquad D(G^{\tau|A})=D(G)^{\times|A}. \] We use the convention that the word order is preserved, that is, \[ G^{\delta\tau|A}=(G^{\delta|A})^{\tau|A}, \qquad G^{\tau\delta|A}=(G^{\tau|A})^{\delta|A}. \] Consequently, \[ D(G^{\delta\tau|A}) = D(G)^{\ast\times|A}, \qquad D(G^{\tau\delta|A}) = D(G)^{\times\ast|A}. \] 

We also use the standard identity that the width of the ribbon-graphic delta-matroid is equal to the Euler genus of the ribbon graph \cite{ChunMoffattNobleRueckriemen2019PLMS}: \[ \omega(D(G))=\varepsilon(G). \] Therefore, for every \(A\subseteq E(G)\), \[ \omega(D(G)^{\ast\times|A}) = \varepsilon(G^{\delta\tau|A}), \] and \[ \omega(D(G)^{\times\ast|A}) = \varepsilon(G^{\tau\delta|A}). \] For a ribbon graph \(G\), and for
\(\rho\in\{\delta,\tau,\delta\tau,\tau\delta,\delta\tau\delta\},\)
define the maximum partial-\(\rho\) Euler genus of \(G\) by
\[
        \varepsilon_M^{\rho}(G)
        =
        \max_{A\subseteq E(G)}
        \varepsilon(G^{\rho|A}).
\]

\begin{corollary}
\label{cor:ribbon-twuality-monotone}
Let \(G\) be a ribbon graph, and let
\(\rho\in\{\delta\tau,\tau\delta,\tau,\delta\tau\delta\}.\)
Then there exists a subset \(A\subseteq E(G)\), together with an ordering
\(e_1,\ldots,e_k\) of the elements of \(A\), such that
\[
        \varepsilon(G^{\rho|A})
        =
        \varepsilon_M^{\rho}(G),
\]
and the sequence
\[
        \varepsilon(G),\,
        \varepsilon(G^{\rho|\{e_1\}}),\,
        \varepsilon(G^{\rho|\{e_1,e_2\}}),\ldots,\,
        \varepsilon(G^{\rho|A})
\]
is non-decreasing.
\end{corollary}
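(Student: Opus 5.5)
The plan is to transfer the delta-matroid results to ribbon graphs through \(D(G)\). First, \(D(G)\) is vf-safe because ribbon-graphic delta-matroids are vf-safe \cite{BoninChunNoble2021}. Next, for each \(\rho\in\{\delta\tau,\tau\delta,\tau,\delta\tau\delta\}\), let \(\sigma\) be the word in \(\ast,\times\) obtained by replacing \(\delta\) with \(\ast\) and \(\tau\) with \(\times\), keeping the order. The key identity is
\[
        D(G^{\rho|B})=D(G)^{\sigma|B}\qquad\text{for every }B\subseteq E(G).
\]
For \(\rho=\tau\), \(\delta\tau\) and \(\tau\delta\) this was already recorded above. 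For \(\rho=\delta\tau\delta\), I would apply the single-operation correspondences three times: \(D(((G^{\delta|B})^{\tau|B})^{\delta|B})=((D(G)^{\ast|B})^{\times|B})^{\ast|B}=D(G)^{\ast\times\ast|B}\). Each step is legitimate because the intermediate objects are ribbon graphs, so the correspondence applies to each in turn.

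Combining this identity with \(\omega(D(H))=\varepsilon(H)\) for every ribbon graph \(H\) gives
\[
        \varepsilon(G^{\rho|B})=\omega(D(G)^{\sigma|B})\qquad\text{for every }B\subseteq E(G).
\]
In particular, \(\varepsilon_M^{\rho}(G)=\max_{B\subseteq E(G)}\omega(D(G)^{\sigma|B})\). The \(B=\emptyset\) case gives \(\varepsilon(G)=\omega(D(G))\).

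Now I would invoke the relevant delta-matroid theorem for \(D(G)\). For \(\sigma\in\{\ast\times,\times\ast\}\), use Corollary~\ref{cor:star-cross-cross-star-global}. For \(\sigma=\times\), use Theorem~\ref{thm:loop-monotone} with \(S=E(G)\). For \(\sigma=\ast\times\ast\), use Theorem~\ref{cor:dual-pivot-monotone} with \(S=E(G)\). Each yields a subset \(A\) with an ordering \(e_1,\ldots,e_k\) such that the maximum is attained and the widths \(\omega(D(G)^{\sigma|\{e_1,\ldots,e_j\}})\) are non-decreasing in \(j\). By the displayed identity, each term of this width sequence equals the corresponding Euler genus \(\varepsilon(G^{\rho|\{e_1,\ldots,e_j\}})\). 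So the same \(A\) and the same ordering give the required non-decreasing sequence, ending at \(\varepsilon_M^{\rho}(G)\).

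The only point needing care is the \(\delta\tau\delta\) case, where the correspondence for the composite word has to be checked rather than quoted. I expect this to be routine: it needs only that partial duals and partial Petrials of ribbon graphs are again ribbon graphs, and that operations on distinct edges commute, which makes \(G^{\rho|B}\) well defined.
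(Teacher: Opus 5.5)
Your proposal is correct and follows essentially the same route as the paper: you pass to the vf-safe delta-matroid \(D(G)\), use \(D(G^{\rho|B})=D(G)^{\sigma|B}\) together with \(\omega(D(H))=\varepsilon(H)\), and invoke Theorem~\ref{thm:restricted} with \(S=E(G)\) (equivalently Corollary~\ref{cor:star-cross-cross-star-global}), Theorem~\ref{thm:loop-monotone}, or Theorem~\ref{cor:dual-pivot-monotone} according to \(\rho\). Your explicit derivation of the \(\delta\tau\delta\) correspondence, by composing the single-operation correspondences three times, fills in a step the paper asserts without comment.
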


\begin{proof}
Define
\[
        \sigma=
        \begin{cases}
        \ast\times, & \text{if } \rho=\delta\tau,\\
        \times\ast, & \text{if } \rho=\tau\delta,\\
        \times, & \text{if } \rho=\tau,\\
        \ast\times\ast, & \text{if } \rho=\delta\tau\delta.
        \end{cases}
\]
Since ribbon-graphic delta-matroids are vf-safe, \(D(G)\) is vf-safe. If
\(\rho\in\{\delta\tau,\tau\delta\}\), apply Theorem~\ref{thm:restricted} to
\(D(G)\) with \(S=E(G)\). If \(\rho=\tau\), apply
Theorem~\ref{thm:loop-monotone}. If \(\rho=\delta\tau\delta\), apply
Theorem~\ref{cor:dual-pivot-monotone}. In each case, we obtain a subset
\(A\subseteq E(G)\), together with an ordering \(e_1,\ldots,e_k\) of the elements
of \(A\), such that
\[
        \omega(D(G)^{\sigma|A})
        =
        \max_{B\subseteq E(G)}
        \omega(D(G)^{\sigma|B}),
\]
and the corresponding width sequence is non-decreasing.

By the correspondence between ribbon-graph twualities and delta-matroid operations,
\[
        D(G^{\rho|B})=D(G)^{\sigma|B}
\]
for every \(B\subseteq E(G)\). Moreover,
\[
        \omega(D(H))=\varepsilon(H)
\]
for every ribbon graph \(H\). Therefore
\[
        \varepsilon(G^{\rho|A})
        =
        \max_{B\subseteq E(G)}
        \varepsilon(G^{\rho|B})
        =
        \varepsilon_M^{\rho}(G).
\]
The non-decreasing width sequence translates exactly into the stated
non-decreasing Euler-genus sequence.
\end{proof}

We also record the following consequence of the feasible-set attainment results
for the partial-\(\ast\times\) and partial-\(\ast\times\ast\) operations.

\begin{corollary}
\label{cor:ribbon-feasible-quasitree}
Let \(G\) be a ribbon graph, and let
\(\rho\in\{\delta\tau,\delta\tau\delta\}.\)
Then there exists a spanning quasi-tree \(Q\) of \(G\) such that, writing
\(E(Q)\) for its edge set,
\[
        \varepsilon(G^{\rho|E(Q)})
        =
        \varepsilon_M^{\rho}(G).
\]
\end{corollary}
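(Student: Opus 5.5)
The corollary follows directly from Theorems~\ref{thm:star-cross-feasible-max} and~\ref{thm:star-cross-star-feasible-max}, applied to the delta-matroid \(D(G)\) and translated through the dictionary recalled at the start of this section. Set \(\sigma=\ast\times\) when \(\rho=\delta\tau\), and \(\sigma=\ast\times\ast\) when \(\rho=\delta\tau\delta\). In both cases the feasible-set attainment theorem only requires \(D(G)\) to be a proper set system. This holds because every ribbon graph has a spanning quasi-tree (for instance a spanning tree is a quasi-tree), so \(\mathcal F(G)\neq\emptyset\).

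The first step is to check the identity \(D(G^{\rho|B})=D(G)^{\sigma|B}\) for every \(B\subseteq E(G)\). For \(\rho=\delta\tau\) this is already stated above. For \(\rho=\delta\tau\delta\) it follows by applying \(D(G^{\delta|A})=D(G)^{\ast|A}\) and \(D(G^{\tau|A})=D(G)^{\times|A}\) three times in succession, using the left-to-right word convention:
\[
D\bigl(((G^{\delta|B})^{\tau|B})^{\delta|B}\bigr)=((D(G)^{\ast|B})^{\times|B})^{\ast|B}=D(G)^{\ast\times\ast|B}.
\]
Combining this with \(\omega(D(H))=\varepsilon(H)\) gives \(\omega(D(G)^{\sigma|B})=\varepsilon(G^{\rho|B})\) for all \(B\), and therefore
\[
\max_{B\subseteq E(G)}\omega(D(G)^{\sigma|B})=\varepsilon_M^{\rho}(G).
\]

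Next, Theorem~\ref{thm:star-cross-feasible-max} (for \(\sigma=\ast\times\)) or Theorem~\ref{thm:star-cross-star-feasible-max} (for \(\sigma=\ast\times\ast\)) produces a feasible set \(F\in\mathcal F(G)\) at which this maximum is attained. Since the feasible sets of \(D(G)\) are exactly the edge sets of spanning quasi-trees, \(F=E(Q)\) for some spanning quasi-tree \(Q\). Then \(\varepsilon(G^{\rho|E(Q)})=\omega(D(G)^{\sigma|F})=\varepsilon_M^{\rho}(G)\), which is the claim.

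There is no substantive obstacle here. The only point needing care is the \(\delta\tau\delta\) correspondence, which the section has not yet written out explicitly. I will verify it by the composition above, noting that operations applied to the same subset \(B\) compose letter by letter.
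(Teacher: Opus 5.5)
Your proposal is correct and matches the paper's proof: you choose \(\sigma\) according to \(\rho\), apply Theorem~\ref{thm:star-cross-feasible-max} or Theorem~\ref{thm:star-cross-star-feasible-max} to \(D(G)\) to get a feasible set \(F=E(Q)\), and translate back using \(D(G^{\rho|B})=D(G)^{\sigma|B}\) and \(\omega(D(H))=\varepsilon(H)\). Your explicit three-step check of the \(\delta\tau\delta\) correspondence and your properness check are small details the paper leaves implicit; for disconnected \(G\), a maximal spanning forest rather than a spanning tree is what witnesses properness.
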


\begin{proof}
Define
\[
        \sigma=
        \begin{cases}
        \ast\times, & \text{if } \rho=\delta\tau,\\
        \ast\times\ast, & \text{if } \rho=\delta\tau\delta.
        \end{cases}
\]
If \(\rho=\delta\tau\), apply Theorem~\ref{thm:star-cross-feasible-max} to
\(D(G)\). If \(\rho=\delta\tau\delta\), apply
Theorem~\ref{thm:star-cross-star-feasible-max} to \(D(G)\). In either case,
there exists a feasible set \(F\in\mathcal F(D(G))\) such that
\[
        \omega(D(G)^{\sigma|F})
        =
        \max_{B\subseteq E(G)}
        \omega(D(G)^{\sigma|B}).
\]
Since \(F\in\mathcal F(D(G))\), there is a spanning quasi-tree \(Q\) of \(G\)
such that  \(F=E(Q).\)
By the correspondence between ribbon-graph twualities and delta-matroid
operations,
\[
        D(G^{\rho|B})=D(G)^{\sigma|B}
\]
for every \(B\subseteq E(G)\). Using also
\[
        \omega(D(H))=\varepsilon(H)
\]
for every ribbon graph \(H\), we obtain
\[
        \varepsilon(G^{\rho|E(Q)})
        =
        \max_{B\subseteq E(G)}
        \varepsilon(G^{\rho|B})
        =
        \varepsilon_M^{\rho}(G).
\]
\end{proof}

\section*{Acknowledgements} 
This work is supported by NSFC (No. 12471326).

 \end{document}